\numberwithin{equation}{section}
\newtheorem{theorem}{Theorem}[section]
\newtheorem{corollary}[theorem]{Corollary}
\newtheorem{definition}[theorem]{Definition}
\newtheorem{lemma}[theorem]{Lemma}
\newtheorem{proposition}[theorem]{Proposition}
\theoremstyle{remark}
\newtheorem{remark}[theorem]{Remark}
\newcommand{\ExpSig}{\textnormal{ESig}}
\newcommand{\OO}{\mathcal{O}}
\newcommand{\TT}{\mathcal{T}}
\newcommand{\CC}{\mathcal{C}}
\newcommand{\PP}{\mathcal{P}}
\newcommand{\HH}{\mathcal{H}}
\newcommand{\FF}{\mathcal{F}}
\newcommand{\GG}{\mathcal{G}}
\newcommand{\BB}{\mathcal{B}}
\newcommand{\LL}{\mathcal{L}}
\newcommand{\SSS}{\mathcal{S}}
\newcommand{\A}{\mathcal{A}}
\newcommand{\R}{\mathbb R}
\newcommand{\C}{\mathbb C}
\newcommand{\X}{\mathbf X}
\newcommand{\Y}{\mathbf Y}
\newcommand{\var}{\textnormal{-var}}
\newcommand{\EEE}[1]{\mathbb E \left[ #1 \right]}
\newcommand{\spn}[1]{\textnormal{span}\left( #1 \right)}
\newcommand{\1}{\mathbf{1}}
\newcommand{\LLL}{\mathbf L}
\newcommand{\uu}{\mathfrak u}
\newcommand{\Tfrak}{\mathfrak T}
\newcommand{\gen}[1]{\langle #1 \rangle}
\newcommand{\floor}[1]{\lfloor #1 \rfloor}
\newcommand\restr[2]{{\left.\kern-\nulldelimiterspace 
  #1 
  \vphantom{\big|} 
  \right|_{#2} 
  }}
\begin{document}

\title{An isomorphism between branched and geometric rough paths}

\author{Horatio Boedihardjo}
\address{H. Boedihardjo,
Department of Mathematics and Statistics,
University of Reading,
Reading RG6 6AX,
United Kingdom}
\email{h.s.boedihardjo@reading.ac.uk}

\author{Ilya Chevyrev}
\address{I. Chevyrev,
Mathematical Institute,
University of Oxford,
Andrew Wiles Building,
Radcliffe Observatory Quarter,
Woodstock Road,
Oxford OX2 6GG,
United Kingdom}
\email{chevyrev@maths.ox.ac.uk}

\subjclass[2010]{Primary 60H10; Secondary 16T05, 60B15}

\keywords{Branched rough paths, Butcher group, signature, non-commutative Fourier transform}

\begin{abstract}
We exhibit an explicit natural isomorphism between spaces of branched and geometric rough paths.
This provides a multi-level generalisation of the isomorphism of Lejay--Victoir~\cite{LV06} as well as a canonical version of the It{\^o}--Stratonovich correction formula of Hairer--Kelly~\cite{HairerKelly15}.
Our construction is elementary and uses the property that the Grossman--Larson algebra is isomorphic to a tensor algebra.

We apply this isomorphism to study signatures of branched rough paths.
Namely, we show that the signature of a branched rough path is trivial if and only if the path is tree-like, and construct a non-commutative Fourier transform for probability measures on signatures of branched rough paths.
We use the latter to provide sufficient conditions for a random signature to be determined by its expected value, thus giving an answer to the uniqueness moment problem for branched rough paths.
\end{abstract}

\maketitle

\tableofcontents

\section{Introduction}

Differential equations of the form
\begin{equation}
\mathrm{d}Y_{t}=\sum_{i=1}^{d}f_{i}(Y_{t})\mathrm{d}X_{t}^{i}\;, \quad Y_{0}=y \in \R^e\;,\label{eq:SDE}
\end{equation}
where $f_{i}:\mathbb{R}^{e}\rightarrow\mathbb{R}^{e}$ are smooth
vector fields and $X=(X^{1},\ldots,X^{d}):[0,1]\rightarrow\mathbb{R}^{d}$
is a path, are ubiquitous in control theory and stochastic analysis.
A major difficulty
in giving meaning to~\eqref{eq:SDE} in the stochastic setting is that many examples of $X$ (e.g., the sample paths of Brownian motion) are
highly irregular, and beyond the reach of Lebesgue--Stieltjes--Young integration theory.

A key observation of Lyons~\cite{Lyons98} in his introduction of rough paths theory was that equation~\eqref{eq:SDE} is well-defined in a pathwise sense
as long as the driver $X$ is a {\it geometric rough path}.
One of the successes of rough paths theory has been a separation of probabilistic and deterministic arguments required to solve such differential equations, which has allowed the driver $X$ to be taken well beyond semi-martingale theory (e.g., fractional Brownian motion with Hurst parameter $H > \frac{1}{4}$) as well as given a novel perspective on many classical results in stochastic analysis.

The geometric theory of rough paths is
restricted to drivers which respect the usual chain rule of smooth paths.
A meaningful generalisation of the theory was provided by Gubinelli's notion of branched rough paths~\cite{Gubinelli10}.
Branched rough paths can be seen as an adaptation of Butcher series~\cite{Butcher16}, which are used in the analysis of non-linear ODEs, to the setting of rough driving paths.

Branched rough paths also fall naturally into Hairer's theory of regularity structures~\cite{Hairer14}, in which drivers of singular stochastic PDEs are typically ``non-geometric'' due to renormalisation procedures required to obtain non-trivial limits.
A careful analysis of branched rough paths in this framework was recently carried out in~\cite{BCFP17}, 
which in turn inspired an algebraic method to renormalise SPDEs in regularity structures~\cite{BCCH17}.
See also~\cite{BFGMS17} for a recent application of techniques from regularity structures in a setting very close to branched rough paths. 

There have been several attempts to reformulate non-geometric as geometric rough paths (the inclusion of geometric into branched/non-geometric rough paths is clear, but a canonical reverse inclusion is not).
For $p\in(2,3)$, it was shown in~\cite{LV06} that the space of non-geometric $p$-rough paths is canonically isomorphic to a space of geometric rough paths over a larger space with mixed $(p,p/2)$-regularity.
This led to a natural (deterministic) It{\^o} formula for rough differential equations.
Another approach was taken in~\cite{HairerKelly15}, where the authors showed that to every branched $p$-rough path one can associate a geometric $p$-rough path, also leading to an It{\^o}-type formula.
The construction of~\cite{HairerKelly15}, however, is highly non-canonical (relying on the Lyons--Victoir extension theorem~\cite{LyonsVictoir07}) and requires a non-trivial transformation of the original branched rough path.
It therefore comes short of the satisfactory answer provided in~\cite{LV06} for $p\in (2,3)$.

The first main contribution of this article is to exhibit, for any $p \geq 1$, an explicit natural isomorphism between the space of branched $p$-rough paths and a suitable space of geometric rough paths, see Theorem~\ref{thm:branched are geometric}.
This result can be seen as a multi-level generalisation of the result of~\cite{LV06}, which is important for studying rough signals with $3$-variation (or $\frac{1}{3}$-H{\"o}lder) regularity and worse.
In turn, this isomorphism gives rise to a canonical It{\^o}--Stratonovich correction formula, see Proposition~\ref{prop:Ito-Strat correction}.

Our construction is elementary and relies on a result of Foissy~\cite{Foissy02} and Chapoton~\cite{Chapoton10}, which was already conjectured by Agrachev--Gamkrelidze~\cite{AG81}, that the Grossman--Larson Hopf algebra, in which branched rough paths take values, is isomorphic to a tensor Hopf algebra.
Theorem~\ref{thm:branched are geometric} follows by combining the result of Foissy--Chapoton with a natural way to measure mixed regularity (we find it convenient for this purpose to work with the notion of $\Pi$-rough paths~\cite{Gyurko16}).

The second contribution of this article is in the study of signatures of branched rough paths.
By reducing to the geometric setting, we show that
\begin{enumerate}[label=(\alph*)]
\item \label{point:sig unique} (Theorem~\ref{thm:unique}) the signature map uniquely determines branched rough paths up to tree-like equivalence,
\item \label{point:Fourier} (Theorem~\ref{thm:charFunc}) there exists a Fourier transform on probability measures on signatures of branched rough paths which uniquely determines measures.
\end{enumerate}
As a consequence of the final point, we give sufficient conditions for the expected value of the signature of a random branched rough path to uniquely determine its law, see Proposition~\ref{prop:moment prob}.

The signature (also known as the Chen–Fliess series)  can be seen as the exponential function on path space and arose in Chen's study of the cohomology of loop space~\cite{Chen57}.
It was shown in~\cite{Hambly10} that the signature uniquely determines bounded variation paths up to tree-like equivalence, and this result was recently generalised to all weakly geometric rough paths~\cite{Boedihardjo16}.

In the stochastic setting, it was shown in~\cite{ChevyrevLyons16} that there exists a natural non-commutative Fourier transform (or characteristic function) on random signatures of geometric rough paths, and that, for of a wide class of stochastic processes, the expectation of the signature uniquely determines its law.
The terms in the expected signature also arise naturally in stochastic Taylor expansions~\cite{Azencott82,BenArous89,FV08} and were instrumental in the development of cubature methods on Wiener space~\cite{LyonsVictoir}.

The difficulty in proving~\ref{point:sig unique} and~\ref{point:Fourier} for branched rough paths directly is that the techniques developed in~\cite{Boedihardjo16, ChevyrevLyons16} rely on the geometric nature of the rough paths at hand.
In particular, as we explain in Section~\ref{subsec:char funcs}, one defines a Fourier transform in terms of unitary representations of the signature group, and the study of these representations in~\cite{ChevyrevLyons16} relies crucially on the fact that the signature is a group-like element of a tensor Hopf algebra.
That said, the reduction to the geometric setting greatly simplifies (though does not trivialise) both problems.

\begin{remark}
We work in this paper only with rough paths over a finite-dimensional vector space.
We note that a framework to address branched rough paths over general Banach spaces has been given in~\cite{CassWeidner16}, and we expect that the isomorphism and uniqueness of signature from Theorems~\ref{thm:branched are geometric} and~\ref{thm:unique} carry over to this setting; the construction of a Fourier transform however is more delicate (in~\cite{ChevyrevLyons16}, this was not achieved for geometric rough paths over infinite-dimensional Banach spaces).
\end{remark}

The structure of the article is the following.
In Section~\ref{sec:prelims} we collect some preliminary material and recall the result of Foissy--Chapoton.
In Section~\ref{sec:Pi rough paths} we recall and prove several properties of geometric $\Pi$-rough paths.
In Section~\ref{sec:branched are geo} we make explicit the isomorphism between branched and geometric rough paths, and derive several important consequences.
In Section~\ref{sec:sigs of branched} we prove the aforementioned results concerning signatures of branched rough paths.
We conclude in Section~\ref{subsec:example} with an explicit example of the isomorphism from Theorem~\ref{thm:branched are geometric} applied to lifts of semi-martingales, and show how to verify the conditions of Proposition~\ref{prop:moment prob} to solve the moment problem.

\medskip
{\bf Acknowledgements.}
I.C. is funded by a Junior Research Fellowship of St John's College, Oxford.
H.B. would like to thank Atul Shekhar for the useful discussions.
We would like to thank the anonymous referee for useful suggestions and pointing out the reference~\cite{AG81}.

\section{Preliminaries}
\label{sec:prelims}

\subsection{Notation}
Throughout the paper, we fix $p \geq 1$.
For a vector space $V$, we let $T((V)) = \prod_{m = 0}^\infty V^{\otimes m}$ \label{T((V)) page ref} denote the Hopf algebra of formal series in tensors of $V$.
By convention, we set $V^{\otimes 0} = \spn{\1} \simeq \R$, where $\1$ is the unit of $T((V))$.
We denote the product and coproduct on $T((V))$ by $\dot \otimes$ \label{dot otimes page ref} and $\Delta$ \label{Delta page ref} respectively.
Let $T(V)= \bigoplus_{m = 0}^\infty V^{\otimes m}$ \label{T(V) page ref} denote the (free) tensor Hopf algebra generated by $V$, which we identify with the subspace of $T((V))$ consisting of finite series.

We denote by $|\cdot|$ a norm on a vector space. 
Whenever we refer to a norm on a tensor product $V_1\otimes\ldots\otimes V_m$, \label{otimes page ref} where $V_1,\ldots, V_m$ are normed spaces, we shall always assume this is the projective norm.
We then let $V_1\hat\otimes\ldots\hat\otimes V_m$ \label{hat otimes page ref} denote its completion.
For a locally convex space $V$, we consequently define the algebra $P(V) = \prod_{m = 0}^\infty V^{\hat\otimes m}$ \label{P(V) page ref}.
Note that $T((V))$ is a subspace of $P(V)$, and that $T((V))=P(V)$ whenever $V$ is finite dimensional.

\subsection{Branched rough paths and the GL algebra}
\label{subsec:branched rps}
We briefly recall the notion of a branched $p$-rough path over $\R^d$.
For a graph $\tau$, we let $N_\tau$ denote its node set.\label{N_{tau} page ref}
A \textit{labelled rooted tree} (henceforth simply \textit{tree}) is a triple $(\tau,r,\LL)$, where $\tau$ is a graph which contains no loops, $r\in N_\tau$ is a distinguished vertex called the root, and $\LL : N_\tau \to \{1,\ldots, d\}$ \label{LL page ref} is a labelling of the nodes of $\tau$.
A forest is an (unordered) multiset of trees;
by convention, we postulate the empty set, denoted by $\1$,
to be a forest.
We will denote by $\mathcal{F}$ \label{FF page ref} and $\mathcal{T}$ \label{TT page ref} the
set of all forests and trees respectively.
Let $\mathcal{B}$ \label{BB page ref} and $\HH$\label{HH page ref}
denote the formal linear span over $\R$ of $\mathcal{T}$ and $\mathcal{F}$ respectively.
Let $\HH^* = \{\sum_{\sigma \in \FF} \lambda_\sigma\sigma : \lambda_\sigma \in \R \; \forall \sigma \in \FF\}$ \label{HH^* page ref}denote the space of formal series in forests. The space $\HH$ contains exactly the series in $\HH^*$ where all but finitely many terms are zero. 

We equip both $\HH$ and $\HH^*$ with the structure of the \textit{Grossman--Larson Hopf algebra} and denote by $\star$ \label{star page ref} and $\delta$ \label{delta page ref} the corresponding product and coproduct respectively (note that $\1$ is the unit for $\star$).
For our purposes, we do not need the precise definitions of $\star$ and $\delta$ and refer the reader to~\cite[Sec.~3]{BCFP17} and~\cite[Sec.~2]{HairerKelly15} for details.

The \textit{Butcher group} $\GG^*$ \label{GG^* page ref} is defined to be the set
of all group-like elements of $\mathcal{H}^*$, i.e., $\GG^* = \{g \in \HH^* : \delta(g)=g\otimes g,\;g\neq 0\}$.
One can check that $\GG^*$ indeed forms a group.

For $\sigma\in\mathcal{F}$, let $|\sigma|$ denote the number of
nodes in $\sigma$. \label{|tau| page ref}
For $N \geq 0$, note that the space of series of the form $\sum_{|\sigma| > N} \lambda_\sigma \sigma$ is an ideal of $\HH^*$.
We can thus form the corresponding quotient algebra $\HH^N$ \label{HH^N page ref} and denote by $\rho^{N}: \HH^* \to \HH^N$ \label{rho page ref} the canonical projection.
By construction, the (finite) set $\FF^N := \{\sigma \in \FF : |\sigma| \leq N \}$ is a basis for $\HH^N$ and we equip $\HH^N$ with the inner product for which $\FF^N$ is an orthonormal basis.
We define the \textit{level-$N$ Butcher group} $\mathcal{G}^{N}$ \label{GG^N page ref} as the image of $\GG^*$ under $\rho^{N}$.

We also denote by $\HH^{(N)}$ \label{HH^(N) page ref} the space spanned by $\FF^{(N)} := \{\sigma \in \FF : |\sigma|=N\}$ and let $\pi^N$ \label{pi^N page ref} denote the projection $\pi^N : \sum_{\sigma} \lambda_\sigma \sigma \mapsto \sum_{|\sigma|=N} \lambda_\sigma \sigma$, where the series $\sum_{\sigma} \lambda_\sigma\sigma$ is understood as an element of $\HH^*$ or $\HH^{M}$ for some $M \geq N$.

Define the simplex $\triangle=\{(s,t):0\leq s\leq t\leq1\}$. \label{triangle page ref}
A \textit{control} is a continuous function $\omega:\triangle\rightarrow [0,\infty)$ \label{omega page ref}
such that 
\[
\omega(s,u)+\omega(u,t)\leq\omega(s,t)\;, \quad \forall \; (s,u)\;,(u,t) \in \triangle \;.
\]
\begin{definition}
A \textit{branched $p$-rough path} is a continuous function $X:\triangle\rightarrow\mathcal{G}^{\lfloor p\rfloor}$
such that
\begin{enumerate}
\item for all $s\leq u\leq t$
\[
X_{s,u}\star X_{u,t}=X_{s,t}\;;
\]
\item there exists a control $\omega$ such that for all $n=1,\ldots, \floor p$ and $s\leq t$, 
\[
|\pi^n X_{s,t}|\leq\omega(s,t)^{n/p}\;.
\]
\end{enumerate}
\end{definition}

We recall the following extension theorem.

\begin{theorem}[\cite{Gubinelli10}]
Let $X:\triangle\rightarrow \GG^{\lfloor p\rfloor}$ be a branched $p$-rough path. Then there exists a unique extension $S(X):\triangle\rightarrow \mathcal{H}^*$ \label{S(X) page ref} such that
\begin{enumerate}
\item $\rho^{\lfloor p\rfloor} S(X)=X$;
\item for all $s\leq u\leq t$, 
\[
S(X)_{s,u}\star S(X)_{u,t}=S(X)_{s,t}\;;
\]
\item for all $n \geq 1$, there exists a control $\omega$ such that $|\pi^n S(X)_{s,t}|\leq\omega(s,t)^{n/p}$.
\end{enumerate}
\end{theorem}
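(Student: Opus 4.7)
The plan is to follow the classical Lyons-type extension argument, adapted to the Grossman--Larson algebra. I would proceed by induction on the level $N \geq \floor{p}$, showing that a multiplicative $X^N : \triangle \to \HH^N$ satisfying the bounds $|\pi^n X^N_{s,t}| \leq \omega(s,t)^{n/p}$ for $n \leq N$ admits a unique multiplicative extension $X^{N+1} : \triangle \to \HH^{N+1}$ that projects to $X^N$ under $\rho^N : \HH^{N+1} \to \HH^N$ and satisfies the analogous bound at level $N+1$. Iterating this yields the desired $S(X) \in \HH^*$. The crucial structural property that makes the induction go through is that the Grossman--Larson product $\star$ respects the grading by node count: since each summand of $\sigma \star \tau$ is a forest with exactly $|\sigma| + |\tau|$ nodes, one has the graded identity $\pi^n(a \star b) = \sum_{i+j = n} (\pi^i a) \star (\pi^j b)$, so the level-$(N+1)$ part of a product depends only on the already-constructed lower levels together with the top-level corrections we are trying to identify.

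For the existence step, pick any linear section $\HH^N \hookrightarrow \HH^{N+1}$ (e.g.\ the trivial lift $\widetilde X^N_{u,v}$ obtained by setting $\pi^{N+1} = 0$), and, for each partition $\PP = \{s = t_0 < \cdots < t_k = t\}$, define
\[
Z^\PP_{s,t} := \widetilde X^N_{t_0,t_1} \star \widetilde X^N_{t_1,t_2} \star \cdots \star \widetilde X^N_{t_{k-1},t_k} \in \HH^{N+1}.
\]
By multiplicativity at level $N$, $\rho^N Z^\PP_{s,t} = X^N_{s,t}$, so distinct $Z^\PP_{s,t}$ differ only in $\HH^{(N+1)}$. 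The effect of inserting a single refinement point $u \in (t_i, t_{i+1})$ changes $\pi^{N+1} Z^\PP_{s,t}$ by a sum of $\star$-products of $\pi^j X^N_{t_i,u}$ and $\pi^{N+1-j} X^N_{u,t_{i+1}}$ for $1 \leq j \leq N$, which by the inductive bound and the graded nature of $\star$ is controlled by $C \,\omega(t_i, t_{i+1})^{(N+1)/p}$. Summing over refinements and using superadditivity of $\omega$ together with $(N+1)/p > 1$ shows that $Z^\PP_{s,t}$ is Cauchy as $|\PP| \to 0$; call the limit $X^{N+1}_{s,t}$. Multiplicativity $X^{N+1}_{s,t} = X^{N+1}_{s,u} \star X^{N+1}_{u,t}$ is automatic by evaluating along partitions passing through $u$, and the estimate $|\pi^{N+1} X^{N+1}_{s,t}| \leq \omega(s,t)^{(N+1)/p}$ (up to a harmless rescaling of $\omega$) passes to the limit.

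Uniqueness at level $N+1$ follows from a telescoping argument: if $X^{N+1}$ and $\widetilde X^{N+1}$ are two extensions, their difference $\delta_{s,t} := \pi^{N+1}(X^{N+1}_{s,t} - \widetilde X^{N+1}_{s,t}) \in \HH^{(N+1)}$ is \emph{additive}, $\delta_{s,t} = \delta_{s,u} + \delta_{u,t}$, because the graded $\star$-identity shows the cross-terms coming from lower levels cancel, and both sides satisfy $|\delta_{s,t}| \leq 2\omega(s,t)^{(N+1)/p}$. Splitting $[s,t]$ by a partition $\PP$ gives $|\delta_{s,t}| \leq 2 \sum_i \omega(t_i, t_{i+1})^{(N+1)/p} \to 0$ as $|\PP| \to 0$, forcing $\delta \equiv 0$. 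The step I expect to be the main technical obstacle is the quantitative ``almost-multiplicativity'' estimate at level $N+1$: it requires a careful accounting of the combinatorics of the Grossman--Larson product to show that the $\star$-product is norm-bounded component-by-component in the grading (the number of forests produced by grafting is finite and controllable), so that the familiar Lyons sewing argument from the tensor-algebra setting transfers intact.
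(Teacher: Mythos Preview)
The paper does not supply its own proof of this statement: it is quoted as a known result from~\cite{Gubinelli10} and no argument is given. Your proposal follows the standard Lyons-type extension argument transported to the Grossman--Larson algebra, which is precisely the approach taken in the original reference; the outline (inductive construction via the sewing/maximal-inequality argument, additivity of the top-level discrepancy for uniqueness) is correct, and the graded identity $\pi^n(a\star b)=\sum_{i+j=n}(\pi^i a)\star(\pi^j b)$ is exactly the structural input needed. There is nothing further to compare.
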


One can show that the $S(X)_{s,t}\in \GG^*$ for all $s\leq t$.
The element $S(X)_{0,1}\in \mathcal{H}^*$ is called the \emph{signature} of $X$.

\subsection{GL algebra as a free algebra}

We recall the following result of Foissy~\cite[Sec.~8]{Foissy02} and Chapoton~\cite{Chapoton10}, which will play a central role in the sequel.
\begin{theorem}[\cite{Foissy02,Chapoton10}]\label{thm:gen}
There exists a subspace $B = \spn{\tau_1,\tau_2, \ldots }$ \label{B page ref} of $\BB$ such that $\HH$ is isomorphic as a Hopf algebra to the tensor Hopf algebra $T(B)$.
\end{theorem}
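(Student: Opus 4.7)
My plan is to reduce the statement, via the Cartier--Milnor--Moore theorem, to freeness of the primitive Lie algebra of $\HH$, and then to construct a set of free generators supported on trees. Since $\HH$ is connected, graded, and cocommutative over $\R$, CMM yields a graded Hopf algebra isomorphism $\HH \cong U(\Lfrak)$, where $\Lfrak := \mathrm{Prim}(\HH)$ carries the commutator bracket coming from $\star$. Because the tensor Hopf algebra $T(B)$ is canonically $U(L(B))$ with $L(B)$ the free Lie algebra on $B$ (and elements of $B$ primitive), the theorem is equivalent to finding a graded subspace $B \subseteq \BB$ such that the induced morphism $L(B) \to \Lfrak$ is a Lie algebra isomorphism.

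A useful preliminary observation is that every individual tree is already primitive: the Grossman--Larson coproduct $\delta$ of a forest splits it into pairs of disjoint sub-forests, so a one-tree forest admits only the two trivial splittings and $\delta(\tau) = \1 \otimes \tau + \tau \otimes \1$; hence $\BB \subseteq \Lfrak$. I would then construct $B$ inductively in the grading by choosing, at each level $n$, trees $\tau_i \in \TT$ of size $n$ whose residues form a basis of the indecomposable quotient $\Lfrak_n / [\Lfrak, \Lfrak]_n$. The required number of generators at each level is uniquely pinned down by the generating function identity $\sum_{n \geq 0} \dim(\HH_n)\, t^n = (1 - \sum_{n \geq 1} \dim(B_n)\, t^n)^{-1}$ forced by $T(B) \cong \HH$. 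Surjectivity of $L(B) \to \Lfrak$ then follows from the construction by a graded Nakayama-type argument.

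The main obstacle is injectivity of $L(B) \to \Lfrak$---equivalently, the absence of any non-trivial Lie relation among the chosen tree generators---which does not follow from Hilbert-series matching alone. To prove it I would pass to the graded dual $\HH_{CK}$, the Connes--Kreimer Hopf algebra, which as a commutative algebra is by construction polynomial on the set of trees, and exploit the non-degenerate graded Hopf pairing $\HH \times \HH_{CK} \to \R$: a hypothetical non-trivial Lie relation in $\Lfrak$ would transfer through the pairing to a relation among polynomial generators of $\HH_{CK}$, contradicting Leray's theorem applied to $\HH_{CK}$. This pairing argument, which requires a delicate combinatorial computation of how the bracket $[\,\cdot\,,\,\cdot\,]$ of primitives dualises to operations on admissible cuts in $\HH_{CK}$, is the technical heart of the Foissy--Chapoton result; Chapoton's alternative proof proceeds operadically, via Koszul duality for the pre-Lie operad whose enveloping algebra functor produces $\HH$.
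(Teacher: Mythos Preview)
The paper does not prove this theorem; it merely cites it from Foissy and Chapoton (and notes the conjecture goes back to Agrachev--Gamkrelidze). So there is no in-paper argument to compare against, and your proposal should be judged as a stand-alone sketch of the cited results.

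Your reduction via Cartier--Milnor--Moore is correct, and so is the identification $\Lfrak = \mathrm{Prim}(\HH) = \BB$: the Grossman--Larson coproduct $\delta$ is dual to disjoint union in Connes--Kreimer, so single trees are exactly the primitives. The inductive construction of generators in $\TT$ lifting a basis of $\Lfrak_n/[\Lfrak,\Lfrak]_n$, together with the graded Nakayama argument for surjectivity of $L(B)\to\Lfrak$, is also fine.

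The gap is your injectivity argument. The claim that a non-trivial Lie relation in $\Lfrak$ would ``transfer through the pairing to a relation among polynomial generators of $\HH_{CK}$, contradicting Leray's theorem'' does not work. Over a field of characteristic zero, the graded dual of \emph{any} connected cocommutative Hopf algebra $U(\Lfrak)$ is commutative and hence, by the Leray theorem, a free polynomial algebra---regardless of whether $\Lfrak$ is free. In the present case $\HH_{CK}$ is polynomial on $\TT$ by its very definition, so invoking Leray adds no information. Concretely, an element of $\ker(L(B)\to\Lfrak)$ maps to zero in $\HH$, so it pairs trivially with everything in $\HH_{CK}$; no algebraic relation in $\HH_{CK}$ is produced. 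The freeness of $\BB$ as a Lie algebra is a genuinely non-formal fact and is precisely the content of the cited theorems. Foissy obtains it via an explicit combinatorial construction (working first with planar trees and a bidendriform/brace structure that forces freeness, then descending), while Chapoton's argument is operadic: he shows that the Lie algebra underlying any free pre-Lie algebra is free, using that the pre-Lie operad is free as a right module over the Lie operad. Your closing sentence gestures at Chapoton's route, but the duality-plus-Leray mechanism you describe before it is not the mechanism of either proof and cannot be made to work as stated.
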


We note that the subspace $B$ (and evidently choice of basis $\tau_1,\tau_2,\ldots$) is not unique.
However, we can and will assume that we have a fixed set of basis elements $\tau_1,\tau_2,\ldots$ which are in $\TT$ and that $|\tau_i| \leq |\tau_j|$ if $i < j$ (cf.~\cite[p.~106]{Foissy02}).
\begin{remark}\label{rem:different B basis}
This assumption on $\tau_1,\tau_2,\ldots$ is only for convenience; all our arguments carry through if instead $\tau_i = \sum_{j=1}^{n_i}c_i^{(j)}\tau_i^{(j)}$, where every $\tau_i^{(j)}$ is an element of $\TT$ with $|\tau_i^{(1)}| = \ldots = |\tau_i^{(n_i)}|$.
\end{remark}
Due to the grading of $\HH$, it follows that every element $\sigma \in \HH^*$ can be written uniquely as
\begin{equation}\label{eq:HH* series}
\sigma = \sum_{m=0}^\infty \sum_{r_1,\ldots, r_m = 1}^\infty \lambda_{r_1,\ldots, r_m} \tau_{r_1}\star \ldots \star \tau_{r_m}
\end{equation}
(the term with $m=0$ corresponds to a linear multiple of the empty forest $\1$).
Note that $\sigma \in \HH$ if and only if all but finitely many of the $\lambda$'s are zero.

\section{Geometric \texorpdfstring{$\Pi$}{Pi}-rough paths}
\label{sec:Pi rough paths}

In order to describe how every branched rough path is canonically a geometric rough path, we find it most natural to work with a generalisation of geometric $p$-rough paths which allows one to measure different components with different regularity; the notion of a $\Pi$-rough path~\cite{Gyurko08,Gyurko16}, which is a generalisation of $(p,q)$-rough paths~\cite{LV06}, provides a convenient framework for this purpose.

\begin{remark}\label{rem:reg structures}
Having different regularities for different components of the ``driver'' is also naturally achieved in the theory of regularity structures.
In fact, the restriction of the algebraic formalism in~\cite{BHZ16} to the one-dimensional setting (and ignoring the presence of polynomials and derivatives on kernels) yields precisely a ``branched'' version of $\Pi$-rough paths.
The isomorphism between branched and geometric rough paths from Theorem~\ref{thm:branched are geometric} then carries over to this setting and reads: every branched $\Pi$-rough path is canonically a geometric $\tilde \Pi$-rough path, where $\tilde \Pi$ is a tuple of regularities which depends on $\Pi$.
In fact, we work with branched $p$-rough paths (i.e., every component of the driver has the same regularity) only for simplicity.
\end{remark}

Let $V$ be a finite dimensional normed vector space.
We represent different homogeneities of a rough path by decomposing the space $V$ into
\begin{equation}\label{eq:V decomp}
V=V^{1}\oplus\ldots\oplus V^{k}\;.
\end{equation}
We define the set of multi-indexes \label{mathcal{A}_k page ref}
\[
\mathcal{A}_{k}=\{(r_{1},\ldots,r_{m}):r_{i}\in\{1,\ldots,k\},\;m \geq 0\}\;.
\]
Due to the decomposition~\eqref{eq:V decomp}, we may write 
\[
T((V))=\prod_{(r_{1},\ldots,r_{m})\in\mathcal{A}_{k}}V^{\otimes R}\;,
\]
where for $R=(r_{1},\ldots,r_{m}) \in \mathcal{A}_{k}$ we denote $V^{\otimes R} = V^{r_1}\otimes\ldots\otimes V^{r_m}$. \label{V^{otimes R} page ref}
We let $\pi_{R} : T((V)) \to V^{\otimes R}$ \label{pi_R page ref} denote the corresponding projection.

Consider a scaling tuple $\Pi=(p_{1},\ldots,p_{k})$ with $p_1\geq \ldots \geq p_k \geq 1$.
We would like
to scale $V^{1}$ by $\frac{1}{p_{1}}$, $V^{2}$ by $\frac{1}{p_{2}}$,
etc.
For a multi-index $R=(r_{1},\ldots,r_{m}) \in \mathcal{A}_{k}$, we set $n_{j}(R)=|\{i:r_{i}=j\}|$ \label{n_j(R) page ref} for all $j=1,\ldots, k$, and define the scaling for $V^{\otimes R}$
by \label{deg_{Pi}(R) page ref}
\[
\mbox{deg}_{\Pi}(R) = \sum_{j=1}^{k}\frac{n_{j}(R)}{p_{j}}\;,
\]
which is the inhomogeneous analogue of the tensor degree.

For $s \geq 0$, define further \label{mathcal{A}^{Pi}_s page ref}
\[
\mathcal{A}_{s}^{\Pi}=\{R\in\mathcal{A}_{k}:\mbox{deg}_{\Pi}(R)\leq s\}.
\]
\begin{remark}
Note that the set $\mathcal{A}_{s}^{\Pi}$ is finite.
In the spirit of Remark~\ref{rem:reg structures}, this fact is directly related to the notion of subcriticality in regularity structures.
\end{remark}
Consider the ideal
\[
B_{s}^{\Pi}=\{v\in T((V)):\pi_{R}(v)=0\quad \forall R\in\mathcal{A}_{s}^{\Pi}\}\;.
\]
We define the truncated tensor
algebra at degree $s$ as the quotient
algebra \label{T^{(Pi,s)}(V) page ref}
\[
T^{(\Pi,s)}(V)=T((V))/B_{s}^{\Pi}\;.
\]

\begin{definition}
A $\Pi$-rough path is a continuous function $X:\triangle\rightarrow T^{(\Pi,1)}(V)$ such that
\begin{enumerate}[label=(\arabic*)]
\item for all $s\leq u\leq t$
\[
X_{s,u}\dot\otimes X_{u,t}=X_{s,t}\; ;
\]
\item \label{point:pvar Pi rps}
there exists a control $\omega$ such that for all $R\in \mathcal{A}^\Pi_1$ and $s \leq t$
\begin{equation*}
|\pi_{R}(X_{s,t})|\leq \omega(s,t)^{\deg_{\Pi}(R)}\;.
\end{equation*}
\end{enumerate}
\end{definition}

The usual extension theorem for $p$-rough paths holds for $\Pi$-rough paths, namely that a $\Pi$-rough path $X$ has a unique extension $S(X) : \triangle \to T((V))$ which preserves the multiplicative and regularity properties, see~\cite[Thm.~2.6]{Gyurko16}. 

Finally, consider the subgroup $G^{(\Pi,1)}(V)$ \label{G^{(Pi,1)}(V) page ref} of $T^{(\Pi,1)}(V)$ defined as the exponential of the Lie subalgebra of $T^{(\Pi,1)}(V)$ generated by $V$.
We say that a $\Pi$-rough path is \textit{weakly geometric} if it takes values in $G^{(\Pi,1)}(V)$.

\subsection{Paths in homogeneous groups}
We describe now how weakly geometric $\Pi$-rough paths can be treated as paths taking values in a homogeneous group.
The advantage of this viewpoint is that it gives rise to a homogeneous $p$-variation metric which is convenient to work with in practice (particularly for establishing interpolation estimates).
The results here will be helpful later in Section~\ref{subsec:unique sigs}.

We introduce on $T^{(\Pi,1)}(V)$ a family of algebra morphisms $(\delta_\lambda)_{\lambda > 0}$, called dilation operators, given for any $v \in V^{\otimes R}$ by $\delta_\lambda(v) = \lambda^{p_1 \deg_{\Pi}(R)}v$.
Note that the restriction of $(\delta_\lambda)_{\lambda > 0}$ to $G^{(\Pi,1)}(V)$ defines a homogeneous group in the sense of Folland--Stein~\cite{FollandStein82} (which is a Carnot group if and only if $p_1=\ldots =p_k=1$).

We equip $G^{(\Pi,1)}(V)$ with a sub-additive homogeneous norm and corresponding left-invariant metric $d$, see~\cite{HebischSikora90}.
Consequently, we define the $p$-variation norm $\|\cdot\|_{p\var}$ \label{Vert cdot Vert_{pvar} page ref} and (homogeneous) $p$-variation metric $d_{p\var}$ \label{d_{pvar} page ref} on the space of functions from $[0,1]$ to $G^{(\Pi,1)}(V)$ (see, e.g.,~\cite[Sec.~3.1]{Chevyrev18}).

For maps $X,Y : \triangle \to T^{(\Pi,1)}(V)$ and any tuple $\Pi'=(p_1',\ldots,p_k')$, we define the (inhomogeneous) $\Pi'$-variation distance \label{rho_{Pi-var} page ref}
\[
\rho_{\Pi'\var}(\bar X,\bar Y) = \max_{R \in \mathcal{A}^\Pi_1} \sup_{\PP} \Big(\sum_{[s,t] \in \PP} |\pi_R(\bar X_{s,t} - \bar Y_{s,t})|^{1/\deg_{\Pi'}(R)} \Big)^{\deg_{\Pi'}(R)}\;,
\]
where $\sup_{\PP}$ denotes the supremum over all finite partitions of $[0,1]$ and $\sum_{[s,t] \in \PP}$ denotes the sum over adjacent point in $\PP$.
Note that there is a one-to-one correspondence between multiplicative maps $X:\triangle \to G^{(\Pi,1)}(V)$ and paths $\tilde X : [0,1] \to G^{(\Pi,1)}(V)$ with $\tilde X_0 = \1$, given by $X_{s,t} = \tilde X_s^{-1}\dot\otimes \tilde X_t$.
An analogue of the ball-box estimate~\cite[Prop.~7.49]{FrizVictoir10} yields the following lemma.
\begin{lemma}\label{lem:LipschitzHolder}
Let $p\geq 1$.
Define $p'_j = \frac{p}{p_1}p_j$ and $\Pi' = (p'_1,\ldots,p'_k)$.
Then the identity map
\[
(C^{p\var}([0,1], G^{(\Pi,1)}(V)), d_{p\var}) \leftrightarrows (C^{p\var}([0,1], G^{(\Pi,1)}(V)), \rho_{\Pi'\var})
\]
is Lipschitz on bounded sets in the $\rightarrow$ direction, and H{\"o}lder continuous on bounded sets in the $\leftarrow$ direction.
\end{lemma}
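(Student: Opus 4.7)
The proof rests on a ``ball-box'' type estimate comparing the homogeneous norm $\|\cdot\|$ on $G^{(\Pi,1)}(V)$ with the Euclidean norms on the graded components: for every $g \in G^{(\Pi,1)}(V)$,
\[
\|g\| \asymp \max_{R \in \A^\Pi_1} |\pi_R(g)|^{1/(p_1 \deg_\Pi(R))}\;,
\]
a standard consequence of the homogeneous group structure (cf.\ \cite[Prop.~7.49]{FrizVictoir10}).  The algebraic identity underlying the choice of $\Pi'$ is
\[
\deg_{\Pi'}(R) = \tfrac{p_1}{p}\deg_\Pi(R)\;,
\]
so that $p_1\deg_\Pi(R) = p\deg_{\Pi'}(R)$; hence the ball-box estimate may be rewritten as $\|g\|^p \asymp \max_R |\pi_R(g)|^{1/\deg_{\Pi'}(R)}$, and everything from here on is a matter of upgrading this pointwise comparison to the $p$-variation setting, separately for single elements and for differences.

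For the $\rightarrow$ direction I would first establish the quantitative interpolation inequality
\[
|\pi_R(X_{s,t} - Y_{s,t})| \leq C(M)\cdot d(X_{s,t}, Y_{s,t})^{p\deg_{\Pi'}(R)}
\]
whenever the $d_{p\var}$-norms of $X$ and $Y$ are bounded by $M$.  This is obtained by writing the ambient-algebra difference as a difference of group products via $X_{s,t} = Y_{s,t}\dot\otimes(Y_{s,t}^{-1}\dot\otimes X_{s,t})$, expanding and projecting onto $V^{\otimes R}$, and bounding each resulting term by the ball-box estimate: factors coming from $Y_{s,t}$ give the prefactor polynomial in $M$, while the factor $Y_{s,t}^{-1}\dot\otimes X_{s,t}$ supplies the homogeneous distance to the correct power.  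Raising to $1/\deg_{\Pi'}(R)$, summing over a partition $\PP$, and taking the appropriate root then yields $\rho_{\Pi'\var}(X,Y) \leq C(M)\cdot d_{p\var}(X,Y)$.

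For the $\leftarrow$ direction the ball-box estimate, applied to $Y_{s,t}^{-1}\dot\otimes X_{s,t}$ (whose graded projections are linear combinations of products of projections of $X_{s,t}-Y_{s,t}$, $X_{s,t}$ and $Y_{s,t}$), gives on bounded sets
\[
d(X_{s,t},Y_{s,t}) \leq C(M)\cdot \max_R |\pi_R(X_{s,t}-Y_{s,t})|^{1/(p_1\deg_\Pi(R))}\;.
\]
Here the different graded components scale with different powers, so the distinct exponents $1/(p_1\deg_\Pi(R))$ cannot be absorbed linearly into the $p$-variation sum; instead, H\"older interpolation between the worst of these exponents and the uniform bound coming from $M$ produces the estimate $d_{p\var}(X,Y) \leq C(M)\cdot \rho_{\Pi'\var}(X,Y)^{\beta}$ for some $\beta\in(0,1]$, which is precisely the H\"older-on-bounded-sets assertion.

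The main obstacle will be the bookkeeping in the $\rightarrow$ direction: the expansion of $Y_{s,t}^{-1}\dot\otimes X_{s,t}$ in each graded component involves many cross terms, and one must check that each contributes at most $C(M)\cdot d(X_{s,t},Y_{s,t})^{p\deg_{\Pi'}(R)}$ with the correct polynomial dependence on $M$.  Once this pointwise interpolation inequality is established, both halves of the lemma follow by routine summation using the identity $p_1\deg_\Pi(R) = p\deg_{\Pi'}(R)$.
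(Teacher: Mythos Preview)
Your strategy—ball-box plus the identity $p_1\deg_\Pi(R)=p\deg_{\Pi'}(R)$—is precisely what the paper has in mind (it gives no proof beyond citing~\cite[Prop.~7.49]{FrizVictoir10}), and your $\leftarrow$ sketch is fine.  However, the displayed pointwise estimate in the $\rightarrow$ direction is false, and your final paragraph repeats it.  In the expansion
\[
\pi_R(X_{s,t}-Y_{s,t})=\sum_{\substack{R'R''=R\\ R''\neq\emptyset}}\pi_{R'}(Y_{s,t})\otimes\pi_{R''}(h)\;,\qquad h=Y_{s,t}^{-1}\dot\otimes X_{s,t}\;,
\]
the factor $\pi_{R''}(h)$ contributes $d(X_{s,t},Y_{s,t})^{p\deg_{\Pi'}(R'')}$, not $d(X_{s,t},Y_{s,t})^{p\deg_{\Pi'}(R)}$; the term with $R''$ a single index yields only a first power of the distance.  (Concretely, in a step-$2$ Carnot group take $Y_{s,t}=\exp(v)$ with $|v|=1$ and $X_{s,t}=\exp(v+\varepsilon e)$: then $|\pi_2(X_{s,t}-Y_{s,t})|\asymp\varepsilon$, not $\varepsilon^2$.)  Consequently, raising to $1/\deg_{\Pi'}(R)$ does not produce a quantity with $d^{p}$, and the summation over partitions as you describe it does not go through.

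The missing idea is that $|\pi_{R'}(Y_{s,t})|$ must be bounded not by the global constant $C(M)$ but by a \emph{control}: $|\pi_{R'}(Y_{s,t})|\leq C\,\omega_Y(s,t)^{\deg_{\Pi'}(R')}$ with $\omega_Y(0,1)\leq M^p$.  Each cross term in $|\pi_R(X_{s,t}-Y_{s,t})|^{1/\deg_{\Pi'}(R)}$ is then dominated by $\omega_Y(s,t)^{1-\alpha}\omega_d(s,t)^{\alpha}$ with $\alpha=\deg_{\Pi'}(R'')/\deg_{\Pi'}(R)\in(0,1]$, which is again a control (powers sum to $1$) and hence superadditive; summing over a partition and taking the $\deg_{\Pi'}(R)$-th power gives $C(M)\,d_{p\var}(X,Y)^{p\alpha\deg_{\Pi'}(R)}$, and the smallest exponent over all terms is exactly $1$.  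This is the standard argument behind the Friz--Victoir reference; with this correction your outline is complete.
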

As usual in rough paths theory, the use of this lemma comes from the fact that the It{\^o}-Lyons solution map for RDEs is locally Lipschitz for the metric $\rho_{\Pi\var}$, while interpolation estimates are most easily derived for the metric $d_{p\var}$.
For example, for any $0<p<p'$, it holds that (e.g.,~\cite[Lem.~3.3]{Chevyrev18}) 
\begin{equation}\label{eq:interpolation}
\sup_{t \in [0,1]}d(X^n_t, X_t) \to 0 \textnormal{ and } \sup_n\|X^n\|_{p\var} < \infty \Rightarrow d_{p'\var}(X^n,X) \to 0\;.
\end{equation}
A useful consequence is the following.
\begin{lemma}\label{lem:weakly geo is geo}
Let $X$ be a weakly geometric $\Pi$-rough path.
Let $p>p_1$ and define $\Pi'$ as in Lemma~\ref{lem:LipschitzHolder}.
Suppose that $p$ is sufficiently close to $p_1$ so that $T^{(\Pi',1)}(V) = T^{(\Pi,1)}(V)$.
Then $X$ is a geometric\footnote{A $\Pi$-rough path is called geometric if it is in the closure under $\rho_{\Pi\var}$ of the lifts of $V$-valued bounded variation paths.} $\Pi'$-rough path.
\end{lemma}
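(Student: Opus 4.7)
The plan is to approximate $X$ by canonical lifts of $V$-valued bounded variation paths in the homogeneous $p$-variation metric $d_{p\var}$, and then transfer this convergence to $\rho_{\Pi'\var}$ via Lemma~\ref{lem:LipschitzHolder}. I view $X$ as a path $\tilde X : [0,1] \to G^{(\Pi,1)}(V)$ with $\tilde X_0 = \1$ taking values in the (possibly non-Carnot) homogeneous group $G^{(\Pi,1)}(V)$.

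The construction of approximants proceeds as follows. By definition, $G^{(\Pi,1)}(V)$ is the exponential of the Lie subalgebra of $T^{(\Pi,1)}(V)$ generated by $V$, so every group element $g$ can be written as $g = \exp(v_1)\dot\otimes\cdots\dot\otimes\exp(v_m)$ for some $v_1,\ldots,v_m \in V$, and hence equals the endpoint of the lift of the piecewise linear $V$-valued BV path concatenating the segments from $0$ to $v_1,\ldots,v_m$. Using the dilation operators $(\delta_\lambda)_{\lambda > 0}$ and compactness of the unit sphere for the homogeneous norm, such a decomposition can be selected so that the resulting lift has homogeneous norm comparable to $\|g\|$; this is the analogue of the ball--box estimate in the Folland--Stein setting. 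For a sequence of partitions $\PP_n = \{0 = t^n_0 < \ldots < t^n_{k_n} = 1\}$ of $[0,1]$ with mesh tending to zero, I define $X^n$ to be the lift of the $V$-valued BV path whose restriction to each $[t^n_i, t^n_{i+1}]$ is chosen in this way, so that $X^n_{t^n_i, t^n_{i+1}} = X_{t^n_i, t^n_{i+1}}$ for every $i$.

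From the $p_1$-variation control on $X$ and the comparability above, one obtains $\sup_n \|X^n\|_{p_1\var} < \infty$, while uniform continuity of the control $\omega$ on the compact simplex $\triangle$ yields $\sup_{t\in[0,1]} d(X^n_t, X_t) \to 0$. The interpolation estimate~\eqref{eq:interpolation}, applied with the given $p > p_1$, then gives $d_{p\var}(X^n, X) \to 0$. Since by hypothesis $T^{(\Pi',1)}(V) = T^{(\Pi,1)}(V)$, an application of Lemma~\ref{lem:LipschitzHolder} upgrades this to $\rho_{\Pi'\var}(X^n, X) \to 0$, and as each $X^n$ is by construction the lift of a $V$-valued BV path, $X$ is a geometric $\Pi'$-rough path.

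The main obstacle I anticipate is the first step: producing $V$-valued BV paths whose lifts realise a prescribed group element with homogeneous norm controlled by $\|g\|$, in a homogeneous group whose dilations scale the layers $V^j$ by different powers $\lambda^{p_1/p_j}$. This is the analogue of Chow's theorem combined with a ball--box estimate in the Folland--Stein framework; although classical in spirit, some care is required because $G^{(\Pi,1)}(V)$ is generally not a Carnot group, and one must track the non-uniform scaling of the layers under $\delta_\lambda$.
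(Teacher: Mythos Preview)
Your proposal is correct and follows the same scheme as the paper's proof: construct piecewise approximants that are lifts of $V$-valued BV paths matching $X$ at partition points, obtain a uniform $p_1$-variation bound plus uniform convergence, then apply the interpolation estimate~\eqref{eq:interpolation} and Lemma~\ref{lem:LipschitzHolder}. For the step you flag as the main obstacle, the paper makes your compactness/ball--box heuristic precise by introducing $(p_1/p_k)$-geodesics: it first shows $\|\gamma\|_{(p_1/p_k)\var}^{p_1/p_k} \lesssim |\tilde\gamma|_{1\var}$ for lifts of BV paths, deduces that the minimal $(p_1/p_k)$-variation of a connecting lift is a sub-additive homogeneous norm equivalent to $\|\cdot\|$, and then obtains the uniform bound $\|X^n\|_{p_1\var} \lesssim \|X\|_{p_1\var}$ by invoking a gluing lemma for geodesic approximations rather than asserting it directly.
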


\begin{proof}
Observe that there exists $C>0$ such that if $\gamma : [0,1] \to G^{(\Pi,1)}(V)$ is the lift of a bounded variation path $\tilde \gamma : [0,1] \to V$ with $|\tilde \gamma|_{1\var} \leq 1$, then
\begin{equation}\label{eq:p1/pk 1 var bound}
\|\gamma\|_{(p_1/p_k)\var}^{p_1/p_k} \leq C |\tilde\gamma|_{1\var}\;.
\end{equation} 
By the Rashevsky-Chow theorem, for every $y \in G^{(\Pi,1)}(V)$ there exists a path $\gamma : [0,1] \to G^{(\Pi,1)}(V)$, which is the lift of a Lipschitz path in $V$, such that $\gamma_0 = \1$ and $\gamma_1 = y$.
By the Arzel{\`a}-Ascoli theorem and the lower semi-continuity of $\left\Vert \cdot \right\Vert_{(p_1/p_k)\var}$, there exists $\gamma$ (not necessarily unique) for which $\|\gamma\|_{(p_1/p_k)\var}$ is minimal amongst all such paths.
We call such a $\gamma$ a $(p_1/p_k)$-geodesic from $\1$ to $y$.
By~\eqref{eq:p1/pk 1 var bound}, we see that $\|\gamma\|_{(p_1/p_k)\var} \to 0$ as $y \to \1$.
It follows that $\|y\|' = \|\gamma\|_{(p_1/p_k)\var}$ defines a sub-additive homogeneous norm on $G^{(\Pi,1)}(V)$, and is therefore equivalent to $\|\cdot\|$.

Let $\PP_n$ be a sequence of partitions for which the mesh size $|\PP_n| \to 0$.
Let $X^n$ be a piecewise $(p_1/p_k)$-geodesic approximation to $X$ over $\PP_n$, i.e., for every $[s,t] \in \PP_n$, the path $[s,t] \to G^{(\Pi,1)}(V), u \mapsto X^n_{s,u}$, is a (reparametrisation of a) $(p_1/p_k)$-geodesic from $\1$ to $X_{s,t}$.
It clearly holds that $X^n\to X$ uniformly.
Furthermore, using the equivalence of $\|\cdot\|$ and $\|\cdot\|'$, one can apply~\cite[Lem.~A.5]{Chevyrev18} to show that $\|X^n\|_{p_1\var} \lesssim \|X\|_{p_1\var}$.
It follows by~\eqref{eq:interpolation} and Lemma~\ref{lem:LipschitzHolder} that $X$ is a geometric $\Pi'$-rough path.
\end{proof}

\section{Branched rough paths are geometric \texorpdfstring{$\Pi$}{Pi}-rough paths}
\label{sec:branched are geo}

We proceed to give an explicit isomorphism between branched $p$-rough paths and weakly geometric $\Pi$-rough paths.
The key remark throughout this section, which is a direct consequence of Theorem~\ref{thm:gen}, is that every element in $\HH^N$ can be uniquely written as
\begin{equation}\label{eq:expression in HH^N}
\sum_{R} \lambda_R \tau_{r_1}\star\ldots\star \tau_{r_m}\;,
\end{equation}
where $R=(r_1,\ldots, r_m)$ ranges over all multi-indexes for which $\sum_{j=1}^m|\tau_{r_j}| \leq N$.

\begin{definition}\label{def:Bk def}
Let $k \geq 1$ be the largest integer such that $|\tau_k| \leq p$.
Consider the decomposition into one-dimensional subspaces
\begin{equation}\label{eq:Bn decomp}
B_k = B^1\oplus\ldots\oplus B^k\;,
\end{equation}
\label{B_k page ref} where $B^j = \spn{\tau_j}$.
Define $p_j = p/|\tau_j|$ and the tuple
\[
\Pi = (p_1,p_2,\ldots, p_{k})\;.
\]
\end{definition}

We now specialise all the notation from Section~\ref{sec:Pi rough paths} by setting $B_k = V$ with the corresponding decomposition~\eqref{eq:Bn decomp}. 
Note that, by construction, every element $\sigma \in T^{(\Pi,1)}(B_k)$ can be written uniquely as
\[
\sigma = \sum_{(r_1,\ldots,r_m) \in \mathcal{A}^\Pi_1} \lambda_R \tau_{r_1}\dot\otimes\ldots \dot\otimes \tau_{r_m}\;,
\]
where $\mathcal{A}^\Pi_1$ is the set of all multi-indexes $R=(r_1,\ldots, r_m) \in \{1,\ldots, k\}^m$, $m \geq 0$, for which $\deg_\Pi(R) = \sum_{j=1}^m |\tau_{r_j}|/p \leq 1$.

\begin{lemma}\label{lem:HN Bn isom}
There is an algebra isomorphism $\Psi : \HH^{\floor p} \to T^{(\Pi,1)}(B_k)$ \label{Psi page ref} given, for all $R = (r_1,\ldots r_m) \in \mathcal{A}^\Pi_1$, by
\[
\Psi : \tau_{r_1}\star\ldots \star \tau_{r_m} \mapsto \tau_{r_1}\dot\otimes \ldots \dot\otimes \tau_{r_m}\;.
\]
Furthermore, there exists $C>0$ such that for all $\tau \in \HH^{\floor p}$ and $n=1,\ldots, \floor p$, it holds that
\begin{equation}\label{eq:two sided bound}
\sup_{\deg_{\Pi}(R) = n/p} C^{-1}|\pi_R\Psi(\tau)| \leq |\pi^n\tau| \leq \sup_{\deg_{\Pi}(R) = n/p} C|\pi_R\Psi(\tau)|\;.
\end{equation}
\end{lemma}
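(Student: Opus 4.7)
My approach is to obtain $\Psi$ as a descent of the Hopf algebra isomorphism furnished by Theorem~\ref{thm:gen}, and then to derive~\eqref{eq:two sided bound} from equivalence of norms on finite-dimensional spaces. First I would invoke Theorem~\ref{thm:gen} to fix a Hopf algebra isomorphism $\tilde\Psi : \HH \to T(B)$ that sends each generator $\tau_i \in B \subset \HH$ to $\tau_i \in B \subset T(B)$; since $T(B)$ is free, this forces $\tilde\Psi(\tau_{r_1}\star\ldots\star\tau_{r_m}) = \tau_{r_1}\otimes\ldots\otimes\tau_{r_m}$ on products. The key observation is that $\tilde\Psi$ is \emph{graded}: if we endow $T(B)$ with the grading in which each $\tau_j$ has degree $|\tau_j|$, so that $\tau_{r_1}\otimes\ldots\otimes\tau_{r_m}$ has degree $\sum_j |\tau_{r_j}|$, this exactly matches the node-counting grading on $\HH$. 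The convention $|\tau_i| \leq |\tau_j|$ for $i<j$ together with Remark~\ref{rem:different B basis} ensures that each $\tau_i$ is homogeneous of its stated degree, so this is legitimate.

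Next I would verify that $\tilde\Psi$ descends to the claimed map $\Psi$. By~\eqref{eq:HH* series} combined with the degree identification above, $\HH^{\floor p}$ has basis $\{\tau_{r_1}\star\ldots\star\tau_{r_m} : \sum_j |\tau_{r_j}| \leq \floor p\}$. Since $\sum_j |\tau_{r_j}|$ is an integer, the condition is equivalent to $\deg_\Pi(R) \leq 1$; moreover it forces each $|\tau_{r_j}| \leq p$, hence $r_j \leq k$ by the definition of $k$. Consequently $\tilde\Psi$ sends this basis bijectively onto the basis $\{\tau_{r_1}\dot\otimes\ldots\dot\otimes\tau_{r_m} : R \in \mathcal{A}^\Pi_1\}$ of $T^{(\Pi,1)}(B_k)$, and the two truncation ideals match. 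The induced map $\Psi$ is therefore a linear bijection, and an algebra homomorphism because $\tilde\Psi$ and both quotient maps are.

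For~\eqref{eq:two sided bound}, I would observe that $\Psi$ restricts to a linear isomorphism between $\HH^{(n)}$ and the finite-dimensional subspace $W_n := \bigoplus_{\deg_\Pi(R) = n/p} B_k^{\otimes R} \subset T^{(\Pi,1)}(B_k)$. The two quantities $|\pi^n\tau|$ and $\sup_{\deg_\Pi(R)=n/p}|\pi_R\Psi(\tau)|$ then define norms on this common finite-dimensional space (via $\Psi$), hence are equivalent. Taking $C$ to be the maximum of the resulting equivalence constants across the finitely many values $n = 1,\ldots,\floor p$ yields a uniform constant and establishes~\eqref{eq:two sided bound}. I do not foresee a substantial obstacle: Theorem~\ref{thm:gen} carries the algebraic content and the remainder is bookkeeping plus finite-dimensional norm equivalence. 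The only delicate point is confirming that $\tilde\Psi$ genuinely respects the node-counting grading, which is what justifies the identification of the two truncation ideals.
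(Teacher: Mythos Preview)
Your proposal is correct and follows essentially the same approach as the paper: invoke Theorem~\ref{thm:gen} to obtain the isomorphism (your careful check that the grading passes to the quotients is exactly what the paper summarises as ``immediate from the definitions''), and use equivalence of norms on the finite-dimensional space $\HH^{(n)}$ for~\eqref{eq:two sided bound}. The paper's proof is a two-line version of what you have written out in full.
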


\begin{proof}
The existence of $\Psi$ is immediate from the definitions and Theorem~\ref{thm:gen}, while~\eqref{eq:two sided bound} follows from the equivalence of norms on the finite-dimensional space $\HH^{(n)}$.
\end{proof}

Recall that the space of maps from $\triangle$ into $\HH^{\floor p}$  and $T^{(\Pi,1)}(B_n)$ can be equipped respectively with the (inhomogeneous) $p$- and $\Pi$-variation metrics
\begin{align*}
\rho_{p\var}(X,Y) &= \max_{n=1,\ldots, \floor p} \sup_{\PP} \Big(\sum_{[s,t] \in \PP} |\pi^n(X_{s,t} - Y_{s,t})|^{p/n} \Big)^{n/p}\;,
\\
\rho_{\Pi\var}(\bar X,\bar Y) &= \max_{R \in \mathcal{A}^\Pi_1} \sup_{\PP} \Big(\sum_{[s,t] \in \PP} |\pi_R(\bar X_{s,t} - \bar Y_{s,t})|^{1/\deg_\Pi(R)} \Big)^{\deg_\Pi(R)}\;,
\end{align*}
where, as before, $\sup_{\PP}$ denotes the supremum over all finite partitions of $[0,1]$ and $\sum_{[s,t] \in \PP}$ denotes the sum over adjacent point in $\PP$.

\begin{theorem}\label{thm:branched are geometric}
Consider a map $X: \triangle \to \HH^{\floor p}$.
For $\Psi$ as in Lemma~\ref{lem:HN Bn isom}, let $\bar X := \Psi(X): \triangle \to T^{(\Pi,1)}(B_k)$. \label{bar{X} page ref}
Then
\begin{enumerate}[label=(\alph*)]
\item \label{point: main thm 1} $X$ is multiplicative if and only if $\bar X$ is multiplicative,
\item \label{point: main thm 2} $X$ takes values in $\GG^{\floor p}$ if and only if $\bar X$ takes values in $G^{(\Pi,1)}(B_k)$,
\item \label{point: main thm 3} there exists $C > 0$, independent of $X$, such that if $Y: \triangle \to \HH^{\floor p}$ is another map and $\bar Y := \Psi(Y)$, then
\begin{equation}\label{eq:p-Pi var bound}
C^{-1} \rho_{\Pi\var}(\bar X,\bar Y)
\leq \rho_{p\var}(X,Y)
\leq C \rho_{\Pi\var}(\bar X,\bar Y)\;;
\end{equation}
in particular, $X$ is a branched $p$-rough path if and only if $\bar X$ is a weakly geometric $\Pi$-rough path over $B_k$ associated with the decomposition~\eqref{eq:Bn decomp},

\item \label{point: main thm 4} suppose that $X$ is a branched $p$-rough path. 
Let $S(X) :\triangle \to \HH^*$ denote the extension of $X$ and $S(\bar X) : \triangle \to T((B_k))$ denote the extension of $\bar X$.
Then $S(X) = \imath S(\bar X)$, where $\imath : T((B_k)) \hookrightarrow P(B)$ is the natural inclusion map, and where we identify $\HH^*$ with $P(B)$ as in Section~\ref{subsec:top on HH}.
\end{enumerate}
\end{theorem}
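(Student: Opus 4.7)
\medskip
\noindent
\textbf{Proof plan.} All four parts are essentially consequences of transporting structure along $\Psi$, so the plan is to treat them in order and reuse the machinery of Lemma~\ref{lem:HN Bn isom} and Theorem~\ref{thm:gen}.

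For \ref{point: main thm 1}, I would simply observe that $\Psi$ is an algebra isomorphism that by its very definition on basis elements sends $\star$ to $\dot\otimes$; so $X_{s,u}\star X_{u,t}=X_{s,t}$ transforms term-by-term into $\bar X_{s,u}\dot\otimes \bar X_{u,t}=\bar X_{s,t}$, and conversely. For \ref{point: main thm 2}, the plan is to argue that $\Psi$ is actually a \emph{Hopf} algebra isomorphism at the truncated level, by noting that the Foissy--Chapoton isomorphism $\HH\simeq T(B)$ preserves the grading $|\cdot|$, and under this grading the ideal $\{\sum_{|\sigma|>\floor p}\lambda_\sigma\sigma\}\subset \HH^*$ matches $B^\Pi_1\cap T((B_k))\subset T((B_k))$, because $|\tau_{r_1}\star\ldots\star \tau_{r_m}|=\sum_j|\tau_{r_j}|$ and the condition $\sum_j|\tau_{r_j}|>\floor p$ is equivalent to $\deg_\Pi(R)>1$. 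Since $\Psi$ then descends to a Hopf algebra isomorphism $\HH^{\floor p}\to T^{(\Pi,1)}(B_k)$, it bijects group-like sets, and $\GG^{\floor p}\leftrightarrow G^{(\Pi,1)}(B_k)$ follows.

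For \ref{point: main thm 3}, I would use \eqref{eq:two sided bound} directly. For each fixed $n\in\{1,\ldots,\floor p\}$ and $R$ with $\deg_\Pi(R)=n/p$ one has $1/\deg_\Pi(R)=p/n$, so summing the bound \eqref{eq:two sided bound} across partitions and taking the relevant root yields
\[
C^{-1}\max_{R:\,\deg_\Pi(R)=n/p}\Big(\sum_{[s,t]\in\PP}|\pi_R(\bar X_{s,t}-\bar Y_{s,t})|^{1/\deg_\Pi(R)}\Big)^{\deg_\Pi(R)}\leq\Big(\sum_{[s,t]\in\PP}|\pi^n(X_{s,t}-Y_{s,t})|^{p/n}\Big)^{n/p},
\]
and a matching upper bound after absorbing the finite cardinality of $\{R:\deg_\Pi(R)=n/p\}$ into the constant. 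Taking the max over $n$ on the left, the max over $R\in\mathcal{A}^\Pi_1$ on the right, and supremum over $\PP$, gives \eqref{eq:p-Pi var bound}. Taking $Y\equiv \1$ and using \ref{point: main thm 1}--\ref{point: main thm 2} then yields the final equivalence of rough path classes.

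For \ref{point: main thm 4}, the strategy is uniqueness of the extension. Identifying $\HH^*$ with $P(B)$ via the Foissy--Chapoton isomorphism, $\imath:T((B_k))\hookrightarrow P(B)$ is the canonical inclusion obtained by setting all components in multi-indexes $R$ involving any $\tau_i$ with $i>k$ to zero. I would verify that $Y:=\imath S(\bar X):\triangle\to P(B)$ satisfies the three defining properties of Gubinelli's extension of $X$: (i) multiplicativity of $Y$ in $P(B)$ is inherited from multiplicativity of $S(\bar X)$ in $T((B_k))$, since $\imath$ is an algebra morphism; (ii) $\rho^{\floor p}Y=\Psi^{-1}\bar X=X$ because the projection of $\imath S(\bar X)$ onto the grading piece $\sum_j|\tau_{r_j}|\leq \floor p$ involves only multi-indexes from $\mathcal{A}^\Pi_1$, on which $\bar X$ is given; (iii) for the regularity $|\pi^n Y_{s,t}|\lesssim\omega(s,t)^{n/p}$, each $n$-th graded piece of $Y$ is a finite linear combination, indexed by multi-indexes $R=(r_1,\ldots,r_m)$ with $r_j\leq k$ and $\sum_j|\tau_{r_j}|=n$, of components $\pi_R S(\bar X)_{s,t}$; each such $R$ has $\deg_\Pi(R)=n/p$, so by the Gyurko extension theorem applied to the $\Pi$-rough path $\bar X$, each component is bounded by $\omega(s,t)^{n/p}$ up to a constant, and the equivalence of norms on the finite-dimensional graded piece of $\HH$ delivers (iii). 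By Gubinelli's uniqueness, $S(X)=Y=\imath S(\bar X)$.

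\emph{Expected obstacle.} Parts \ref{point: main thm 1}--\ref{point: main thm 3} are rather routine once \eqref{eq:two sided bound} and the Hopf compatibility of $\Psi$ are in hand. The most delicate point is \ref{point: main thm 4}: one has to be comfortable asserting that the branched extension produces no components along $\tau_i$ for $i>k$. Conceptually this is exactly the statement that $B_k$, which involves only trees of degree $\leq p$, already suffices to support \emph{all} higher-order components of $S(X)$; the verification requires matching the Gyurko regularity along each $R$ with $\deg_\Pi(R)=n/p$ to the branched regularity at level $n$, which is essentially a bookkeeping exercise in the Foissy--Chapoton basis.
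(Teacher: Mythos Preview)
Your proposal is correct and follows essentially the same route as the paper: transport multiplicativity, group-likeness, and regularity along the algebra isomorphism $\Psi$, then invoke uniqueness of the extension for part~\ref{point: main thm 4}. The only cosmetic difference is in~\ref{point: main thm 2}, where the paper argues purely Lie-theoretically (since $\Psi$ is an algebra isomorphism it carries the Lie subalgebra generated by $\tau_1,\ldots,\tau_k$ to the corresponding one on the tensor side, hence matches their exponentials) rather than via the Hopf structure and group-likes as you do; both are standard and equivalent.
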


\begin{proof}
Since $\Psi$ is an algebra isomorphism, it holds that $X$ is multiplicative if and only $\bar X$ is, which proves~\ref{point: main thm 1}.
Likewise, $\log(X)$ takes values in the Lie subalgebra of $\HH^{\floor p}$ generated by the trees $\tau_1,\ldots,\tau_k$ if and only if $\log(\bar X)$ takes values in the Lie subalgebra of $T^{(\Pi,1)}(B_k)$ generated by $\tau_1,\ldots, \tau_k$, which proves~\ref{point: main thm 2}.
Next, the bound~\eqref{eq:p-Pi var bound} is an immediate consequence of~\eqref{eq:two sided bound} and the definition of $\rho_{p\var}$ and $\rho_{\Pi\var}$, which proves~\ref{point: main thm 3}.

To prove~\ref{point: main thm 4}, consider $N \geq \floor{p}$ and $Z = \rho^N\imath S(\bar X) : \triangle \to \HH^N$.
It is immediate that $\rho^{\floor p}Z = X$, and, since $\imath$ and $\rho^N$ are algebra morphisms, that $Z$ is a multiplicative map.
Furthermore, using that $S(\bar X)$ has finite $\Pi$-variation~\cite[Thm.~2.6]{Gyurko16}, it readily follows from an analogous bound to~\eqref{eq:two sided bound} that $Z$ has finite $p$-variation.
By uniqueness of the branched rough path lift, it follows that $Z = \rho^N S(X)$, and thus $S(X) = \imath S(\bar X)$ as desired.
\end{proof}

\begin{remark}
The reader may wonder how canonical our interpretation of $X$ as a geometric $\Pi$-rough path is, given that the space $B$ and the decomposition~\eqref{eq:Bn decomp} depend on the choice of basis $\tau_1,\tau_2,\ldots$.
It is easy to see, however, that a different choice of $\tau_1,\tau_2,\ldots$ will lead to canonically isomorphic objects (provided $\tau_i$ are chosen in accordance with Remark~\ref{rem:different B basis}).
\end{remark}

\begin{remark}
The isomorphism between geometric and non-geometric rough paths shown in~\cite{LV06} is precisely the case $p\in (2,3)$ of Theorem~\ref{thm:branched are geometric}.
\end{remark}

\begin{remark}
In~\cite{HairerKelly15}, for any branched $p$-rough path, the authors employ the Lyons--Victoir extension theorem~\cite{LyonsVictoir07} to construct a geometric $p$-rough path taking values in the tensor algebra over $\BB^{\floor p} = \spn{\tau \in \TT : |\tau| \leq {\floor p} }$.
In contrast, the isomorphism in Theorem~\ref{thm:branched are geometric} does not ``extend'' the branched rough path in any way and instead treats its target space as a different algebraic structure.
The explicit nature of this isomorphism will be particularly important in our study of the signature in Section~\ref{sec:sigs of branched}.
\end{remark}

We now present two consequences of Theorem~\ref{thm:branched are geometric}.
For the remainder of this section, suppose that $X : \triangle \to \GG^{\floor p}$ is a branched $p$-rough path and that $\bar X = \Psi(X) : \triangle \to T^{(B,1)}(B_k)$.
As before, denote by $S(X) : \triangle \to \HH^*$ the extension of $X$.

First, it follows from Theorem~\ref{thm:branched are geometric} that the level-$N$ lift of a branched $p$-rough path is the solution of a linear differential equation driven by a geometric $\Pi$-rough path.

\begin{corollary}
Let $N \geq \floor{p}$ and let $Y = \rho^N S(X)_{0,\cdot} :[0,1]\to\GG^N$.
Then $Y$ is the solution of the linear RDE
\[
\mathrm{d}Y = f(Y) \mathrm{d} \bar X\;,
\]
where $f = (f_1,\ldots, f_k)$ are the (left-invariant) vector fields on $\HH^N$ given by right-multiplication by $(\tau_1,\ldots,\tau_k)$ respectively.
\end{corollary}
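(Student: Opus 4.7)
The plan is to transport, via the algebra morphism $\Phi:=\rho^N\circ\imath$, the linear RDE satisfied by the signature of the geometric $\Pi$-rough path $\bar X$ into the desired equation for $Y$ on $\HH^N$. By Theorem~\ref{thm:branched are geometric}\ref{point: main thm 4} one has
\[
Y_t \;=\; \rho^N S(X)_{0,t} \;=\; \rho^N\imath\, S(\bar X)_{0,t} \;=\; \Phi(\bar Z_t),
\]
where $\bar Z := S(\bar X)_{0,\cdot}$. Since both the inclusion $\imath$ and the projection $\rho^N$ are algebra morphisms, so is $\Phi:T((B_k))\to \HH^N$; it sends $\dot\otimes$ to $\star$ and fixes each generator $\tau_j$, $j=1,\ldots,k$.

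Next I would invoke the classical fact, which extends to weakly geometric $\Pi$-rough paths via~\cite[Thm.~2.6]{Gyurko16}, that $\bar Z$ solves the linear RDE
\[
\mathrm d\bar Z_t \;=\; \bar Z_t\,\dot\otimes\,\mathrm d\bar X_t, \qquad \bar Z_0=\1,
\]
in $T((B_k))$. Writing $\bar X=\sum_{j=1}^k \bar X^j\tau_j$ in the basis $\{\tau_j\}_{j=1}^k$ of $B_k$, the driving vector fields on the tensor algebra are $\bar f_j(z)=z\,\dot\otimes\,\tau_j$. Applying $\Phi$ and using that $\Phi(z\,\dot\otimes\,\tau_j)=\Phi(z)\star\tau_j$, the image $Y=\Phi(\bar Z)$ solves the pushed-forward RDE
\[
\mathrm dY_t \;=\; \sum_{j=1}^{k} Y_t\star\tau_j\,\mathrm d\bar X^j_t \;=\; f(Y_t)\,\mathrm d\bar X_t,
\]
as required. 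The vector fields $f_j(y)=y\star\tau_j$ are indeed left-invariant on $\HH^N$, since right-multiplication generates left-invariant vector fields in any Lie group.

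The one nontrivial point is justifying the pushforward of an RDE through the algebra morphism $\Phi$ at the rough-path level. At the smooth level this is immediate from the chain rule, because $\Phi$ is linear and multiplicative. At the rough level, I would approximate $\bar X$ by lifts of piecewise $(p_1/p_k)$-geodesic paths as in the proof of Lemma~\ref{lem:weakly geo is geo}, apply the identity at the smooth level, and pass to the limit using continuity of the It\^o--Lyons solution map for linear RDEs together with Theorem~\ref{thm:branched are geometric}\ref{point: main thm 3} to control the relevant variation norms.
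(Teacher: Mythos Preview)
Your argument is correct and matches the route the paper implicitly has in mind: the corollary is stated without proof in the paper, as an immediate consequence of Theorem~\ref{thm:branched are geometric}\ref{point: main thm 4}, and you have spelled out exactly that implication. The identification $Y=\Phi(S(\bar X)_{0,\cdot})$ via $\Phi=\rho^N\circ\imath$, together with the standard fact that the signature solves the linear RDE of right-multiplication by the driver, is precisely the intended reasoning.

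One minor remark on the last paragraph: the approximation by piecewise $(p_1/p_k)$-geodesics is valid but heavier than needed. For \emph{linear} vector fields $f_j(y)=y\star\tau_j$ the solution of the RDE in a finite-dimensional algebra is given explicitly by acting with the truncated signature, i.e., the Davie/Euler expansion terminates and reads
\[
Y_t \;=\; Y_0 \star \Phi\big(S(\bar X)_{0,t}\big),
\]
as one checks by matching the iterated-integral coefficients. Since $\Phi$ is a linear algebra morphism between finite-dimensional quotients (it factors through $T^{(\Pi,N/p)}(B_k)$), the intertwining $\Phi(z\,\dot\otimes\,\tau_j)=\Phi(z)\star\tau_j$ passes directly to the rough level without any smoothing; continuity of the It\^o--Lyons map is not required here. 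This shortens your final step but does not change the substance of the proof.
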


The interest in the above corollary stems from the fact that in general one is not able to describe $Y$ as the solution of a linear RDE driven by the original branched rough path $X$, cf.~\cite[Rem.~34]{BCFP17}.

Second, we show an It{\^o}-type formula that any RDE driven by $X$ coincides in a natural way with an RDE driven by $\bar X$.
For the remainder of this section, consider bounded smooth vector fields $f=(f_1,\ldots, f_d)$ on $\R^e$ with bounded derivatives of all orders.\footnote{The regularity assumptions on $f$ can be significantly weakened, see the sharp version of the universal limit theorem for geometric $\Pi$-rough paths~\cite[Thm.~4.3]{Gyurko16};
we restrict to smooth vector fields only for simplicity.}

Recall the pre-Lie product $\curvearrowright: \BB\times\BB \to \BB$ defined by $\tau\curvearrowright\sigma = \pi_{\BB}(\tau\star \sigma)$ \label{curvearrowright page ref}, where $\pi_{\BB} : \HH \to \BB$ is the projection onto $\BB$.
Explicitly, $\tau \curvearrowright \sigma = \sum_{\bar\tau} n(\tau,\sigma,\bar\tau)\bar\tau$,
where the sum is over all trees $\bar\tau \in \TT$ and $n(\tau,\sigma,\bar\tau)$ is the number of single admissible cuts of $\bar\tau$ for which the branch is $\tau$ and the trunk is $\sigma$.
Recall also that the space of vector fields $C^\infty(\R^e,\R^e)$ can be equipped with a pre-Lie product defined by $f\triangleleft g = \sum_{i=1}^e f^i\partial_i g$.
By a result of Chapoton--Livernet~\cite{Chapoton01}, we can identify $\BB$ with the free pre-Lie algebra over $\R^d$, and thus there exists a unique pre-Lie algebra morphism $\BB \to C^\infty(\R^e,\R^e)$, $\tau \mapsto f_\tau$, for which $f_{\bullet_i} = f_i$ for all $i = 1, \ldots, d$.

\begin{remark}\label{rem:[tree] notation}
Every tree $\tau \in \TT$ can be written as $\tau = [\sigma_1\ldots\sigma_n]_{i}$ (which is unique up to permutation of the $\sigma_j$), by which we mean that $\tau$ is formed by attaching the trees $\sigma_1,\ldots,\sigma_n \in \TT$ to a root with label $i \in \{1,\ldots, d\}$ (if $n=0$, we have $\tau = \bullet_i$).
For $\tau = [\sigma_1\ldots\sigma_n]_{i} \in \TT$, the vector field $f_\tau$ admits the inductive form
\begin{equation}\label{eq:ftau}
f_\tau = c_{\tau}(D^n f_i)(f_{\sigma_1},\ldots,f_{\sigma_n})\;,
\end{equation}
where $c_\tau$ is a combinatorial factor expressible in terms of the symmetries of $\tau$.
\end{remark}

\begin{proposition}\label{prop:Ito-Strat correction}
Define the vector fields $\bar f = (\bar f_1,\ldots, \bar f_k) = (f_{\tau_1},\ldots, f_{\tau_k})$ on $\R^e$.
Then the unique solutions to the (branched) RDE $\mathrm{d}Y = f(Y)\mathrm{d}X$ and the (geometric) RDE $\mathrm{d}\bar Y = \bar f(\bar Y) \mathrm{d}\bar X$ coincide.
\end{proposition}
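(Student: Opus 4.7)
The plan is to invoke the uniqueness of RDE solutions---valid for branched $p$-rough paths via Gubinelli's extension lemma and for geometric $\Pi$-rough paths via the universal limit theorem~\cite[Thm.~4.3]{Gyurko16}---so that it suffices to exhibit a single curve solving both equations. I proceed by matching the Davie--Euler expansions of the two RDEs term by term, using part~\ref{point: main thm 1} of Theorem~\ref{thm:branched are geometric} to relate the coefficients of $X_{s,t}$ (in the basis $\tau_{r_1}\star\cdots\star\tau_{r_m}$ of $\HH^{\lfloor p\rfloor}$ provided by Theorem~\ref{thm:gen}) to those of $\bar X_{s,t}$ (in the basis $\tau_{r_1}\dot\otimes\cdots\dot\otimes\tau_{r_m}$).

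The branched RDE admits a Davie expansion $Y_t - Y_s = E_{f}(X_{s,t})(Y_s) + o(|t-s|)$ for a linear ``elementary differential'' map $E_f : \HH^{\lfloor p\rfloor} \to C^\infty(\R^e,\R^e)$ sending each tree $\tau$ to $f_\tau$ (with the appropriate symmetry weighting); similarly, the geometric $\Pi$-rough RDE satisfies $\bar Y_t - \bar Y_s = E_{\bar f}(\bar X_{s,t})(\bar Y_s) + o(|t-s|)$, where $E_{\bar f}$ sends $\tau_{r_1}\dot\otimes\cdots\dot\otimes\tau_{r_m}$ to the iterated pre-Lie product $\bar f_{r_1}\triangleleft(\bar f_{r_2}\triangleleft\cdots\triangleleft \bar f_{r_m})$. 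Matching the two expansions therefore reduces to the algebraic identity
\[
E_f\bigl(\tau_{r_1}\star\cdots\star\tau_{r_m}\bigr) = \bar f_{r_1}\triangleleft(\bar f_{r_2}\triangleleft\cdots\triangleleft \bar f_{r_m})
\]
for every $R = (r_1,\ldots,r_m) \in \mathcal{A}^\Pi_1$. This identity reflects the Oudom--Guin theorem: the Grossman--Larson algebra $(\HH, \star)$ is the enveloping algebra of the Lie algebra underlying $(\BB, \curvearrowright)$, so the pre-Lie morphism $\tau \mapsto f_\tau$ from $(\BB,\curvearrowright)$ to $(C^\infty(\R^e,\R^e),\triangleleft)$ extends uniquely to an associative morphism $\HH \to \End(C^\infty(\R^e))$ taking $\star$-products to compositions of differential operators, with the image of $\tau_{r_1}\star\cdots\star\tau_{r_m}$ equal to $\bar f_{r_1}\circ\cdots\circ \bar f_{r_m}$; a coordinate computation shows that applying this composition to the coordinate functions $y\mapsto y^i$ recovers the components of the iterated pre-Lie product on the right-hand side. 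Alternatively, via Theorem~\ref{thm:gen} the morphism factors as $\HH \xrightarrow{\Psi} T(B) \to \End(C^\infty(\R^e))$, the second arrow being the unique algebra morphism from the free associative algebra determined by $\tau_j \mapsto \bar f_j$.

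The main obstacle is the careful accounting of the symmetry factors entering the pre-Lie morphism $\tau \mapsto f_\tau$ and the Davie scheme, together with the verification that the Oudom--Guin extension does indeed realise Grossman--Larson products as the iterated pre-Lie products of the $\bar f_j$ appearing on the geometric side. Once the algebraic identity is established, the two Davie expansions coincide term by term, and uniqueness of RDE solutions forces $Y = \bar Y$.
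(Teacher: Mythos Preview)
Your proposal is correct and follows essentially the same route as the paper: both arguments characterise $Y$ and $\bar Y$ by their Davie/Euler expansions and reduce the claim to the algebraic identity that the branched elementary differential $f_{\pi_{\BB}(\tau_{r_1}\star\cdots\star\tau_{r_m})}$ coincides with the geometric one $(\bar f_{r_1}\circ\cdots\circ\bar f_{r_m})I$. The only stylistic difference is that the paper phrases the geometric side directly as a composition of first-order differential operators applied to the identity and constructs the algebra morphism $\HH\to\OO(\R^e)$ via the freeness of $\HH$ from Theorem~\ref{thm:gen}, whereas you pass through the right-nested pre-Lie product $\bar f_{r_1}\triangleleft(\bar f_{r_2}\triangleleft\cdots\triangleleft\bar f_{r_m})$ and invoke Oudom--Guin; the two viewpoints are equivalent since $(\bar f_{r_1}\circ\cdots\circ\bar f_{r_m})I$ equals that nested pre-Lie product and $\pi_{\BB}(\tau_{r_1}\star\cdots\star\tau_{r_m})=\tau_{r_1}\curvearrowright(\tau_{r_2}\curvearrowright\cdots\curvearrowright\tau_{r_m})$.
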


\begin{proof}
Recall that $Y$ and $\bar Y$ are characterised by the Euler estimates
\[
Y_{s,t} = \sum_{|\tau| \leq \floor{p}} f_{\pi_B X_{s,t}}(Y_s) + o(\omega(s,t))
\]
(where we treat $X_{s,t}$ as an element of $\HH$ by the embedding $\HH^{\floor p} \hookrightarrow \HH$) and
\[
\bar Y_{s,t} = \sum_{(r_1,\ldots,r_m) \in \A^\Pi_1}\bar f_{r_1}\ldots\bar f_{r_m} I (\bar Y_s) \gen{\bar X_{s,t}, \tau_{r_1}\dot\otimes\ldots\dot\otimes \tau_{r_m}} + o(\omega(s,t))\;,
\]
where $\omega$ is a control on the $p$-variation of $X$ and $\bar X$ (for the former, see~\cite[Prop.~3.8]{HairerKelly15};\footnote{Note though that the factors $c_\tau$ in~\eqref{eq:ftau} are missing from the definition of $f_\tau$ in~\cite{HairerKelly15}} for the latter, see~\cite[Cor.~10.15]{FrizVictoir10}).
To conclude that $Y$ and $\bar Y$ coincide, it remains to observe that
\[
\sum_{(r_1,\ldots,r_m) \in \A^\Pi_1} \bar f_{r_1}\ldots\bar f_{r_m} I (y) \gen{\bar X_{s,t}, \tau_{r_1}\dot\otimes\ldots\dot\otimes \tau_{r_m}}
= \bar f_{X_{s,t}} I(y)
= f_{\pi_\BB X_{s,t}}(y)\;,
\]
where $\bar f_{\tau}$ denotes the image of $\tau \in \HH$ under the unique algebra morphism $\HH \to \OO(\R^e)$ which maps $\tau_{r} \mapsto f_{\tau_r}$ for $r=1,2,\ldots$, where $\OO(\R^e)$ is the algebra of differential operators on $\R^e$ (this algebra morphism exists due to Theorem~\ref{thm:gen}).
\end{proof}

\section{Signatures of branched rough paths}
\label{sec:sigs of branched}

\subsection{Uniqueness of signatures}
\label{subsec:unique sigs}

We now apply the identification of branched $p$-rough paths and geometric $\Pi$-rough paths to prove the following characterisation of branched rough paths with trivial signature.

For a topological space $\SSS$, recall that a continuous path $X : [0,1] \to \SSS$ is called {\it tree-like} if there exists an $\R$-tree $\Tfrak$, a continuous function $\phi : [0,1] \to \Tfrak$, and a map $\psi : \Tfrak \to \SSS$ such that $\phi(0)=\phi(1)$ and $X = \psi\circ\phi$.

\begin{theorem}\label{thm:unique}
Let $X : \triangle \mapsto \GG^{\floor p}$ be a branched $p$-rough path.
Then $S(X)_{0,1} = \1$ if and only if $X_{0,\cdot}$ is tree-like. 
\end{theorem}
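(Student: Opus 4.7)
The plan is to reduce this to the analogous characterisation for weakly geometric rough paths established in~\cite{Boedihardjo16}, via the isomorphism of Theorem~\ref{thm:branched are geometric}.

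First I set $\bar X = \Psi(X)$, a weakly geometric $\Pi$-rough path over $B_k$ by Theorem~\ref{thm:branched are geometric}\ref{point: main thm 2}--\ref{point: main thm 3}. Since $\Psi$ is an algebra isomorphism between finite-dimensional spaces, it restricts to a topological group isomorphism $\GG^{\floor p} \to G^{(\Pi,1)}(B_k)$, so $X_{0,\cdot}$ is tree-like in $\GG^{\floor p}$ if and only if $\bar X_{0,\cdot}$ is tree-like in $G^{(\Pi,1)}(B_k)$. By Theorem~\ref{thm:branched are geometric}\ref{point: main thm 4} together with the injectivity of $\imath : T((B_k)) \hookrightarrow P(B)$, we also have $S(X)_{0,1} = \1$ if and only if $S(\bar X)_{0,1} = \1$ in $T((B_k))$. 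The problem thus reduces to proving the equivalence at the level of $\bar X$.

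To apply the existing geometric theory, I would regard $\bar X$ as a standard weakly geometric $q$-rough path for $q = p_1$. Let $\bar X^{(q)}$ denote the level-$\floor{q}$ classical truncation of $S(\bar X)$, taking values in the step-$\floor{q}$ free nilpotent Lie group $G^{\floor{q}}(B_k)$ over $B_k$. The crucial observation is that $\deg_{\Pi}(R) \geq |R|/q$ for every multi-index $R$ (since $p_j \leq p_1 = q$ for all $j$), so the $\Pi$-variation estimates on $S(\bar X)$ provided by~\cite[Thm.~2.6]{Gyurko16} yield the classical $q$-variation estimates required of $\bar X^{(q)}$. Weak geometricity of $\bar X^{(q)}$ is inherited from that of $\bar X$ via the corresponding identification of Lie subalgebras. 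Applying the main theorem of~\cite{Boedihardjo16} to $\bar X^{(q)}$ then gives: $S(\bar X)_{0,1} = S(\bar X^{(q)})_{0,1} = \1$ if and only if $\bar X^{(q)}_{0,\cdot}$ is tree-like in $G^{\floor{q}}(B_k)$.

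The remaining step, which I expect to be the main obstacle, is to identify tree-likeness of $\bar X^{(q)}_{0,\cdot}$ in $G^{\floor{q}}(B_k)$ with tree-likeness of $\bar X_{0,\cdot}$ in the quotient group $G^{(\Pi,1)}(B_k)$. The direction ``$\bar X^{(q)}$ tree-like implies $\bar X$ tree-like'' is immediate by composition with the continuous quotient morphism $G^{\floor{q}}(B_k) \to G^{(\Pi,1)}(B_k)$. For the converse I would use the $\R$-tree factorisation of $\bar X_{0,\cdot}$ to approximate it, in the $\Pi$-variation distance, by lifts of tree-like bounded variation paths in $B_k$; the canonical lifts of such approximants to $G^{\floor{q}}(B_k)$ are manifestly tree-like, and combining Lemma~\ref{lem:weakly geo is geo} with uniqueness of the rough path extension and continuity of the signature allows one to pass to the limit.
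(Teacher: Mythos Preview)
Your reduction via $\Psi$, and the handling of the ``only if'' direction by lifting $\bar X$ to a weakly geometric $p_1$-rough path $\bar X^{(q)}$ and applying~\cite{Boedihardjo16}, match the paper exactly. The divergence is entirely in the ``if'' direction.

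The paper does \emph{not} attempt to show that $\bar X$ tree-like forces $\bar X^{(q)}$ tree-like. Instead it proves directly (Proposition~\ref{prop:treelike implies trivial}) that any tree-like weakly geometric $\Pi$-rough path has trivial signature, by checking that the argument of~\cite[Sec.~3]{Boedihardjo16} carries over to the $\Pi$-setting: the ``central case'' (a path followed by its reversal has trivial signature) is supplied by the inverse identity of Lemma~\ref{lem:Inverse}, and the passage from the central case to arbitrary tree-like paths uses the interpolation~\eqref{eq:interpolation} together with continuity of the signature in $\rho_{\Pi'\var}$. This yields $S(\bar X)_{0,1}=\1$ without ever invoking $\bar X^{(q)}$.

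Your route has a gap as stated. You propose to approximate the tree-like $\bar X$ in $\Pi$-variation by lifts of tree-like bounded-variation paths in $B_k$, but Lemma~\ref{lem:weakly geo is geo} only supplies piecewise-geodesic approximants, which are \emph{not} tree-like, and the $\R$-tree factorisation $\bar X_{0,\cdot}=\psi\circ\phi$ gives no regularity on $\psi$ that would let you manufacture tree-like BV approximants from it. The approximants actually used in~\cite[Sec.~3.3]{Boedihardjo16} are not BV lifts at all: they are concatenations of ``there and back'' segments of the original rough path $\bar X$ itself, whose signatures vanish by the inverse lemma. If you carry that argument through you will have proved $S(\bar X)_{0,1}=\1$ directly---which is precisely Proposition~\ref{prop:treelike implies trivial}---and your detour through tree-likeness of $\bar X^{(q)}$ becomes superfluous.
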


We will first prove that a tree-like branched rough path has trivial signature, which, by Theorem~\ref{thm:branched are geometric}, is equivalent to showing that a weakly geometric tree-like $\Pi$-rough path has trivial signature. The proof is effectively identical to that of geometric rough paths case~\cite[Thm.~1.1]{Boedihardjo16}, but we find it necessary to emphasise several details.

\begin{lemma}
\label{lem:Inverse}
Let notation be as in Section~\ref{sec:Pi rough paths}.
Let $X : \triangle \to T^{(\Pi,1)}(V)$ be a weakly geometric $\Pi$-rough path, and define 
\[
(\overleftarrow{X})_{s,t}=X_{1-t,1-s}^{-1}\;.
\]
Then for all $s\leq t$,
\[
S(\overleftarrow{X})_{1-t,1-s}\dot\otimes S(X)_{s,t}=\1\;.
\]
\end{lemma}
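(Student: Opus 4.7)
The plan is to reduce to the case of bounded variation paths via the geometric approximation of Lemma~\ref{lem:weakly geo is geo}, and then exploit the elementary fact that the signature of a smooth path concatenated with its reverse is trivial.

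First I would verify that $\overleftarrow{X}$ is itself a weakly geometric $\Pi$-rough path. Multiplicativity follows from
\[
(\overleftarrow X)_{s,u}\dot\otimes (\overleftarrow X)_{u,t}
= X_{1-u,1-s}^{-1}\dot\otimes X_{1-t,1-u}^{-1}
= (X_{1-t,1-u}\dot\otimes X_{1-u,1-s})^{-1} = (\overleftarrow X)_{s,t}\;,
\]
and the $\Pi$-variation bound transfers under $(s,t)\mapsto(1-t,1-s)$ since the homogeneous norm on $G^{(\Pi,1)}(V)$ is inversion-invariant. In particular $\overleftarrow X$ has a unique signature-extension $S(\overleftarrow X):\triangle\to T((V))$ by the extension theorem for $\Pi$-rough paths.

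Next I apply Lemma~\ref{lem:weakly geo is geo}: choose $p>p_1$ close enough to $p_1$ that $T^{(\Pi',1)}(V)=T^{(\Pi,1)}(V)$, and approximate $X$ in $\rho_{\Pi'\var}$ by canonical lifts $X^n$ of bounded variation paths $\gamma^n:[0,1]\to V$. Setting $\overleftarrow{\gamma^n}_t:=\gamma^n_{1-t}$, the canonical lift $\overleftarrow{X^n}$ of $\overleftarrow{\gamma^n}$ coincides with the construction $\overleftarrow{\phantom{X}}$ applied to $X^n$, and converges to $\overleftarrow X$ in $\rho_{\Pi'\var}$ since time-reversal is an isometry of this metric. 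For each fixed $n$, the path $\gamma^n$ restricted to $[s,t]$ concatenated with $\overleftarrow{\gamma^n}$ restricted to $[1-t,1-s]$ retraces itself, so its signature is trivial; equivalently,
\[
S(\overleftarrow{X^n})_{1-t,1-s}\dot\otimes S(X^n)_{s,t}=\1 \quad \forall\; s\leq t\;.
\]

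Finally, the extension theorem~\cite[Thm.~2.6]{Gyurko16} provides continuity of $X\mapsto \rho^N S(X)$ with respect to $\rho_{\Pi'\var}$ at every level $N\geq 1$, and the truncated tensor product is continuous at each level. Passing to the limit in $n$ at each level yields the identity in $T((V))$. The one technical point to check carefully is this continuity of the signature-extension at arbitrary tensor level with respect to $\rho_{\Pi'\var}$: it relies on uniform $\Pi'$-variation bounds on the approximating sequence $X^n$, which are built into the characterisation of $X$ as a geometric $\Pi'$-rough path and which in turn control (via the extension theorem) each level $\rho^N S(X^n)$ uniformly in $n$.
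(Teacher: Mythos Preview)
Your proposal is correct and follows essentially the same approach as the paper: reduce to bounded variation paths via Lemma~\ref{lem:weakly geo is geo}, where the identity is elementary, and pass to the limit by continuity of the signature extension. The paper's proof is a two-line sketch of precisely this strategy; you have simply filled in the details (multiplicativity of $\overleftarrow X$, behaviour of reversal under approximation, and level-wise continuity). One minor point: calling time-reversal an ``isometry'' of $\rho_{\Pi'\var}$ is not quite accurate since inversion is not linear, but convergence is still preserved on bounded sets (e.g.\ via Lemma~\ref{lem:LipschitzHolder} and the inversion-invariance of the homogeneous norm), so the argument goes through.
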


\begin{proof}
The claim is clearly true if $X$ has bounded variation (as a path in $V$) and the conclusion follows by density and Lemma~\ref{lem:weakly geo is geo}.
\end{proof}

\begin{remark}
While we state Lemma~\ref{lem:Inverse} only for weakly geometric $\Pi$-rough paths, a direct (albeit more involved) argument shows that the same result holds true for any $\Pi$-rough path (not necessarily weakly geometric) and Banach space $V$.
\end{remark}

\begin{proposition}\label{prop:treelike implies trivial}
Let $X$ be a weakly geometric $\Pi$-rough path for which $X_{0,\cdot}$ is tree-like. Then $S(X)_{0,1}=\1$.
\end{proposition}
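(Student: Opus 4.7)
The plan is to adapt the strategy of \cite[Thm.~1.1]{Boedihardjo16} to the weakly geometric $\Pi$-rough path setting, using the homogeneous group structure on $G^{(\Pi,1)}(V)$ introduced in Section~\ref{sec:Pi rough paths} together with the interpolation and comparison tools provided by Lemmas~\ref{lem:LipschitzHolder} and~\ref{lem:weakly geo is geo}. The rough idea is to approximate $X$ by piecewise $(p_1/p_k)$-geodesic paths that are themselves ``finitely tree-like'' in the group, show that their signatures are trivial by matched cancellation via Lemma~\ref{lem:Inverse}, and pass to the limit.

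\emph{Approximation.} Let $\phi : [0,1] \to \Tfrak$ and $\psi : \Tfrak \to G^{(\Pi,1)}(V)$ realise the tree-like factorisation, with $\phi(0) = \phi(1)$, so that $X_{0,t} = \psi(\phi(t))$. Using that $\phi([0,1])$ is compact in the $\R$-tree $\Tfrak$, I choose nested partitions $\PP_n$ of $[0,1]$ with mesh tending to $0$ whose nodes $(t^n_i)_i$ map under $\phi$ into an increasing dense subset of branching points of a finite sub-$\R$-tree containing $\phi([0,1])$. I then define $X^n : [0,1] \to G^{(\Pi,1)}(V)$ as the concatenation of $(p_1/p_k)$-geodesics from $X_{0,t^n_i}$ to $X_{0,t^n_{i+1}}$, exactly as in the proof of Lemma~\ref{lem:weakly geo is geo}. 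The same ball-box argument used there shows that $X^n \to X$ uniformly and $\sup_n \|X^n\|_{p_1\var} \lesssim \|X\|_{p_1\var} < \infty$.

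\emph{Triviality of $S(X^n)_{0,1}$.} This is the main claim. Because the partition points live on a finite sub-$\R$-tree and $\phi(0) = \phi(1)$, the discrete trajectory $(X_{0,t^n_i})_i$ realises a closed walk on a finite combinatorial tree: every outgoing traversal of an edge is matched by a return traversal of the same edge. By construction, the piece of $X^n$ corresponding to a returning traversal is the time-reversal in $G^{(\Pi,1)}(V)$ of the piece corresponding to the matching outgoing traversal, since both are $(p_1/p_k)$-geodesics between the same two group elements. Hence, by Lemma~\ref{lem:Inverse} and the multiplicativity $S(X^n)_{a,b}\dot\otimes S(X^n)_{b,c} = S(X^n)_{a,c}$, each matched pair contributes the identity to $S(X^n)_{0,1}$. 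An induction on the number of edges in the sub-tree yields $S(X^n)_{0,1} = \1$.

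\emph{Passage to the limit.} Choose $p > p_1$ close enough to $p_1$ so that $T^{(\Pi',1)}(V) = T^{(\Pi,1)}(V)$ with $\Pi'$ as in Lemma~\ref{lem:LipschitzHolder}. The interpolation estimate~\eqref{eq:interpolation}, applied using the uniform convergence and uniform $p_1$-variation bound from the first step, gives $d_{p\var}(X^n, X) \to 0$, which Lemma~\ref{lem:LipschitzHolder} upgrades to $\rho_{\Pi'\var}(X^n, X) \to 0$. Since the extension map is continuous in $\rho_{\Pi'\var}$ level by level (by the extension theorem cited after the definition of $\Pi$-rough paths), one concludes $S(X^n)_{0,1} \to S(X)_{0,1}$, and hence $S(X)_{0,1} = \1$. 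The main obstacle I foresee is the combinatorial step above: one must arrange the partitions carefully so that the discrete walk $(X_{0,t^n_i})_i$ is genuinely closed and nested in the sense needed for the pairwise cancellation. I expect this to be handled by the standard observation that any continuous loop in an $\R$-tree factors through the Reeb graph of its height function, which is itself a finite tree after discretisation.
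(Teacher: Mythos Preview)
Your overall approach matches the paper's, which simply records that the argument of \cite[Sec.~3]{Boedihardjo16} carries over verbatim once one uses Lemma~\ref{lem:Inverse} for the ``central case'' and Lemma~\ref{lem:LipschitzHolder}, the interpolation~\eqref{eq:interpolation}, and continuity of the signature in $\rho_{\Pi'\var}$ for the limiting step. Your piecewise-geodesic approximation together with the closed-walk-on-a-tree cancellation is exactly the content of that section, and the combinatorial step you flag as the main obstacle is in fact routine: any closed walk on a finite tree reduces to the trivial walk by successively deleting adjacent out--back pairs, which is precisely the induction you describe.

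There is, however, one step that fails as written. You claim that the returning piece of $X^n$ is the time-reversal of the outgoing one ``since both are $(p_1/p_k)$-geodesics between the same two group elements''. But $(p_1/p_k)$-geodesics in $G^{(\Pi,1)}(V)$ are not unique---this is already noted in the proof of Lemma~\ref{lem:weakly geo is geo}---so two independently chosen geodesics between the same endpoints need not be reversals of one another, and Lemma~\ref{lem:Inverse} would then not apply. The fix is immediate and does not disturb the rest of the argument: for each edge of the finite sub-tree, fix once and for all a $(p_1/p_k)$-geodesic, and use its time-reversal (which is again a $(p_1/p_k)$-geodesic of the same norm, since $\|\cdot\|'$ is symmetric) for every traversal of that edge in the opposite direction. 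The uniform $p_1$-variation bound inherited from Lemma~\ref{lem:weakly geo is geo} is unchanged, and the pairwise cancellation via Lemma~\ref{lem:Inverse} now goes through. With this correction your proof is complete and coincides with the paper's.
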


\begin{proof}
The proof in~\cite[Sec.~3]{Boedihardjo16} for weakly geometric $p$-rough paths carries over to our present setting mutatis mutandis.
Indeed,
\begin{itemize}
\item the ``central case''~\cite[Lem.~3.1]{Boedihardjo16} follows in the identical way by applying Lemma~\ref{lem:Inverse}, and
\item the proof in~\cite[Sec.~3.3]{Boedihardjo16} follows in the identical way once we use Lemma~\ref{lem:LipschitzHolder}, the interpolation result~\eqref{eq:interpolation}, and that the signature map is a continuous function in the metric $\rho_{\Pi'\var}$~\cite[Lem.~2.1.2]{Gyurko08}.
\end{itemize}
\end{proof}

\begin{proof}[Proof of Theorem~\ref{thm:unique}]
Denote as before $\bar X = \Psi(X)$.
Note that $\bar X$ is tree-like if and only if $X$ is.
Furthermore, by part~\ref{point: main thm 3} of Theorem~\ref{thm:branched are geometric}, $\bar X$ is a weakly geometric $\Pi$-rough path and, by part~\ref{point: main thm 4} of Theorem~\ref{thm:branched are geometric}, it holds that $S(X)_{0,1}=\1$ if and only if $S(\bar X)_{0,1}=\1$.
The ``if'' direction now follows from Proposition~\ref{prop:treelike implies trivial}, while the ``only if'' direction follows from the main result of~\cite{Boedihardjo16} (one simply notes that every weakly geometric $\Pi$-rough path lifts canonically to a weakly geometric $p_1$-rough path over $B_k$).
\end{proof}

\subsection{Fourier transform and moment problem}
\label{subsec:char funcs}

We now discuss the Fourier transform (or characteristic function) and moment problem for signatures of branched rough paths.
The results here employ Theorem~\ref{thm:gen} to identify $\HH$ with the tensor alegbra $T(B)$, which allows us to extend the methods of~\cite{ChevyrevLyons16}.

\subsubsection{Universal locally \texorpdfstring{$m$}{m}-convex algebra over $B$}\label{subsec:loc m convex alg}

We begin by constructing a certain universal topological algebra over $B$.

\begin{remark}\label{rem:c00}
Throughout this subsection, we use no special structure of $B$ and note that it may be replaced by $c_{00}$, the vector space of $\R$-valued sequences which are eventually zero.
\end{remark}

We equip $B$ with the product topology given by the sequence of semi-norms $(\gamma_k)_{k \geq 1}$ \label{gamma_k page ref}
\[
\gamma_{k}\Big(\sum_{r} \lambda_r \tau_r\Big) = \sum_{r=1}^k |\lambda_r|\;.
\] 
Let $E_a(B)$ \label{E_a(B) page ref} denote the topological algebra formed by equipping the tensor algebra $T(B)$ with the corresponding universal locally $m$-convex topology, see~\cite[Sec.~2]{ChevyrevLyons16}.
Explicitly, a fundamental family of sub-multiplicative semi-norms on $E_a(B)$ is given by $\{\exp(K\gamma_k)\}_{k \geq 1, K > 0}$, where
\[
\exp(K\gamma_k)= \sum_{n =0}^\infty K^n\gamma_k^{\otimes n}\;,
\]
and
\[
\gamma_k^{\otimes n} \Big(\sum_{m=0}^\infty \sum_{r_1,\ldots, r_m = 1}^\infty \lambda_{r_1,\ldots, r_m} \tau_{r_1}\dot\otimes\ldots \dot\otimes\tau_{r_m} \Big) = \sum_{r_1,\ldots, r_n = 1}^k |\lambda_{r_1,\ldots, r_n}|
\]
(as usual, the term with $m=0$ on the LHS corresponds to a linear multiple of $\1 \in B^{\otimes 0}$).
Note that all sums above are finite.

For $m \geq 1$, the (complete) locally convex space $B^{\hat\otimes m}$ can be identified with the space of formal series
\begin{equation}\label{eq:element of B otimes k}
\sum_{r_1,\ldots, r_m = 1}^\infty \lambda_{r_1,\ldots,r_m} \tau_{r_1}\dot\otimes\ldots \dot\otimes\tau_{r_m}\;,
\end{equation}
and so $P(B) = \prod_{m \geq 0} B^{\hat \otimes m}$ can be identified with the space of formal series
\[
\sum_{m =0}^\infty \sum_{r_1,\ldots, r_m = 1}^\infty \lambda_{r_1,\ldots,r_m} \tau_{r_1}\dot\otimes\ldots \dot\otimes\tau_{r_m}\;.
\]
Let $E(B)$ \label{E(B) page ref} denote the completion of $E_a(B)$. The following lemma is immediate from the above discussion (and is a special case of~\cite[Cor.~2.5]{ChevyrevLyons16}).

\begin{lemma}\label{lem:completion}
The space $E(B)$ can be identified with the subspace of $P(B)$ consisting of series
\[
\sigma = \sum_{m = 0}^\infty \sum_{r_1,\ldots, r_m = 1}^\infty \lambda_{r_1,\ldots, r_m} \tau_{r_1}\dot\otimes \ldots \dot\otimes\tau_{r_m}
\]
such that for every $k \geq 1$ and $K > 0$
\[
\exp(K \gamma_k)(\sigma) = \sum_{m=0}^\infty \sum_{r_1,\ldots, r_m = 1}^k K^m |\lambda_{r_1,\ldots, r_m}| < \infty\;.
\]
\end{lemma}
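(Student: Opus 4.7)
The plan is to realise $E(B)$ concretely inside $P(B)$ via the canonical inclusion $T(B) \hookrightarrow P(B)$, and to characterise its image as exactly the set of series obeying the stated summability condition. Since $P(B)$ is complete and Hausdorff, once the inclusion is verified to be continuous, $E(B)$ is exactly the closure of $T(B)$ inside $P(B)$, and the task becomes an explicit description of that closure.

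Continuity of the inclusion is immediate from the definition of the seminorms: on $B^{\otimes m}$, $\exp(K\gamma_k)$ restricts to $K^m \gamma_k^{\otimes m}$, which is dominated by a constant multiple of the projective norm on $B^{\hat\otimes m}$ (indeed, $\gamma_k^{\otimes m}$ only reads off the finitely many coefficients indexed by $r_1,\ldots,r_m \leq k$). For the forward inclusion, I would take a Cauchy net $(\sigma_\alpha)$ in $E_a(B)$; testing against $\exp(K\gamma_k)$ with $k$ large enough to see a given multi-index shows that each coefficient $\lambda^\alpha_{r_1,\ldots,r_m}$ converges in $\R$, so $\sigma_\alpha \to \sigma$ coordinate-wise for some $\sigma \in P(B)$. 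The finiteness condition $\exp(K\gamma_k)(\sigma) < \infty$ is then immediate from Fatou applied to the positive sum: since the Cauchy net is uniformly bounded in every seminorm, the partial sums defining $\exp(K\gamma_k)(\sigma)$ are bounded above by $\liminf_\alpha \exp(K\gamma_k)(\sigma_\alpha) < \infty$.

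For the reverse inclusion, I would approximate any $\sigma \in P(B)$ satisfying the summability condition by the double truncations $\sigma^{(N,k)}$ obtained by keeping only multi-indices with tensor degree $m \leq N$ and entries $r_1,\ldots,r_m \leq k$; each $\sigma^{(N,k)}$ lies in $T(B) = E_a(B)$. For any fixed seminorm $\exp(K_0\gamma_{k_0})$ and any $k \geq k_0$, the remainder $\exp(K_0\gamma_{k_0})(\sigma - \sigma^{(N,k)})$ equals the tail $\sum_{m > N}\sum_{r_1,\ldots,r_m \leq k_0} K_0^m |\lambda_{r_1,\ldots,r_m}|$ of the convergent series $\exp(K_0\gamma_{k_0})(\sigma)$, and therefore tends to $0$ as $N \to \infty$. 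Hence $\sigma$ lies in the closure of $E_a(B)$ in $P(B)$, completing the identification.

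The only real bookkeeping is to keep the two truncation parameters $N$ (tensor degree) and $k$ (index cutoff) conceptually separate, so that for each test seminorm $\exp(K_0\gamma_{k_0})$ one fixes $k \geq k_0$ first and then lets $N \to \infty$, which isolates a convergent tail of a single series. Once this is written out, the argument matches the structure of~\cite[Cor.~2.5]{ChevyrevLyons16} specialised to the directed family of seminorms $(\gamma_k)_{k \geq 1}$, and no further obstacles arise.
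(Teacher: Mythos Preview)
Your argument is correct and supplies exactly the details the paper omits: the paper gives no proof, stating only that the lemma is immediate from the preceding discussion and a special case of~\cite[Cor.~2.5]{ChevyrevLyons16}, which you also invoke.

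One framing issue worth tightening: the sentence ``$E(B)$ is exactly the closure of $T(B)$ inside $P(B)$'' is not literally true, since the closure of $T(B)$ in $P(B)$ under the product topology is all of $P(B)$. What you actually need (and what your subsequent paragraphs establish) is that the continuous inclusion $E_a(B)\hookrightarrow P(B)$ extends to an \emph{injective} continuous map $E(B)\to P(B)$ whose image is the stated subspace. Injectivity is implicit in your Cauchy-net argument: if a Cauchy net in $E_a(B)$ has all coefficients tending to zero, then for each fixed $K,k$ and $\epsilon>0$ one passes to the limit in $\beta$ through the finite partial sums of $\exp(K\gamma_k)(\sigma_\alpha-\sigma_\beta)<\epsilon$ to obtain $\exp(K\gamma_k)(\sigma_\alpha)\leq\epsilon$, so the net converges to zero in $E_a(B)$. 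Your reverse inclusion via double truncation is correct; to extract a single approximating sequence converging in \emph{all} seminorms simultaneously, take the diagonal $\sigma^{(N,N)}$ and note that once $N\geq k_0$ the remainder in $\exp(K_0\gamma_{k_0})$ is the tail you describe.
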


We note that $E(B)$ is metrizable and separable since $B$ is (see~\cite[Cor.~2.4]{ChevyrevLyons16}).
Moreover, recall from~\cite[Sec.~3]{ChevyrevLyons16} that the coproduct $\Delta : E_a(B) \to E_a(B) \otimes E_a(B)$, defined as usual by $\Delta(\tau) = \tau\otimes \1 + \1\otimes \tau$ for all $\tau \in B$ and extended uniquely as an algebra morphism, is continuous, and so extends to the completions $\Delta : E(B) \to E(B) \hat \otimes E(B)$.
\begin{definition}\label{def:G}
Let $G = \{g \in E(B) : \Delta g = g\otimes g, g\neq 0\}$ \label{G page ref} denote the set of group-like elements of $E(B)$.
\end{definition}
Note that $G$ is a Polish space and a topological group.

\subsubsection{Topology on \texorpdfstring{$\HH$}{H}}\label{subsec:top on HH}

By Theorem~\ref{thm:gen}, there is a Hopf algebra isomorphism $\HH \simeq E_a(B)$, and we henceforth equip $\HH$ with the locally $m$-convex topology induced by this isomorphism.
We let $\hat \HH$ \label{hat HH page ref} denote the completion of $\HH$ and note that the previous isomorphism extends to $\hat\HH \simeq E(B)$ as locally $m$-convex algebras.
We continue to use $\exp(K\gamma_k)$ for the semi-norms on $\HH$ induced by this isomorphism.
That is, a fundamental family of sub-multiplicative semi-norms on $\HH$ is given by $\{\exp(K\gamma_k)\}_{k \geq 1, K > 0}$, where
\begin{equation}\label{eq:exp K gamma def}
\exp(K\gamma_k)= \sum_{n =0}^\infty K^n\gamma_k^{\otimes n}\;,
\end{equation}
and
\[
\gamma_k^{\otimes n} \Big(\sum_{m=0}^\infty \sum_{r_1,\ldots, r_m = 1}^\infty \lambda_{r_1,\ldots, r_m} \tau_{r_1}\star \ldots \star \tau_{r_m} \Big)
= \sum_{r_1,\ldots, r_n = 1}^k |\lambda_{r_1,\ldots, r_n}|\;.
\]
Note that the above sums are all finite.
Remark that while the semi-norms $\exp(K\gamma_k)$ on $\HH$ depend on the choice of basis of $B$, the locally $m$-convex topology on $\HH$ does not.

Since every element in $\HH^*$ admits a unique representation as~\eqref{eq:HH* series}, we can identify $\HH^*$ with $P(B) = \prod_{m\geq 0} B^{\hat\otimes m}$.
In particular, by Lemma~\ref{lem:completion}, we can identify $\hat \HH$ with a subspace of $\HH^*$ consisting of formal series with a suitable decay condition.

Recall the set of group-like elements $\GG^* = \{g \in \HH^* : \delta g = g\otimes g, g\neq 0 \}$.

\begin{definition}\label{def:GGk}
Let $\GG = \GG^* \cap \hat \HH$ \label{GG page ref} denote the set of group-like elements in $\hat\HH$.
Furthermore, for $k \geq 1$, let $\GG_k = \GG \cap P(B_k)$ \label{GG_k page ref} denote the group-like elements in $\hat \HH$ generated by $\tau_1,\ldots,\tau_k$, where we canonically identify $P(B_k)$ with a subalgebra of $\HH^*$.
\end{definition}

Note that $\GG$ is precisely the image of $G$ under the isomorphism $E(B) \simeq \hat\HH$.
Equivalently, $\GG$ is the subset of all $g \in \GG^*$ for which $\exp(K\gamma_n)(g) < \infty$ for all $n \geq 1$ and $K > 0$.

Let $k$ be as in Definition~\ref{def:Bk def}.
Then $\GG_k$ consists of all $g \in \GG$ whose unique series representation as~\eqref{eq:HH* series} contains no terms $\tau$ for which $|\tau| > \floor p$,
i.e.,
\[
g = \sum_{m = 0}^\infty \sum_{r_1,\ldots, r_m=1}^k \lambda_{r_1,\ldots, r_m} \tau_{r_1}\star\ldots\star \tau_{r_m}\;.
\]
An immediate consequence of Theorem~\ref{thm:branched are geometric} part~\ref{point: main thm 4} along with the factorial decay of geometric $\Pi$-rough paths~\cite[Thm.~2.6]{Gyurko16}, is that the signature of a branched $p$-rough path takes values in $\GG_k$.
More precisely, we have the following corollary.

\begin{corollary}\label{cor:sig in subalgebra}
Let $p \geq 1$ and $X$ a branched $p$-rough path.
For every $(s,t) \in \triangle$, it holds that $S(X)_{s,t}$ can be uniquely written as
\[
S(X)_{s,t} = \sum_{m =0 }^\infty \sum_{r_1,\ldots, r_m=1}^k \lambda^{s,t}_{r_1,\ldots, r_m} \tau_{r_1}\star\ldots\star \tau_{r_m}\;,
\]
where, for all $K > 0$,
\[
\sum_{m = 0}^\infty \sum_{r_1,\ldots, r_m=1}^k K^m|\lambda_{r_1,\ldots, r_m}^{s,t}| < \infty\;.
\]
\end{corollary}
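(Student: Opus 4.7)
The plan is to read the statement directly off Theorem~\ref{thm:branched are geometric} part~\ref{point: main thm 4} combined with the factorial decay of the $\Pi$-rough path signature from~\cite[Thm.~2.6]{Gyurko16}. No new argument is really needed beyond translating the known bounds through the algebra isomorphism $\Psi$.

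First, I set $\bar X = \Psi(X)$, which by part~\ref{point: main thm 3} of Theorem~\ref{thm:branched are geometric} is a weakly geometric $\Pi$-rough path over $B_k$. Its signature $S(\bar X)_{s,t}$ then lies in $T((B_k))$, and since $B_k = B^1 \oplus \ldots \oplus B^k$ with each $B^j = \spn{\tau_j}$ one-dimensional, $S(\bar X)_{s,t}$ admits the unique expansion
\[
S(\bar X)_{s,t} = \sum_{m=0}^\infty \sum_{r_1,\ldots,r_m = 1}^k \lambda_{r_1,\ldots,r_m}^{s,t}\, \tau_{r_1}\dot\otimes\ldots\dot\otimes\tau_{r_m}\;.
\]
By part~\ref{point: main thm 4} we have $S(X)_{s,t} = \imath S(\bar X)_{s,t}$, and under the identification of $P(B)$ with $\HH^*$ coming from Theorem~\ref{thm:gen}, the inclusion $\imath$ sends $\tau_{r_1}\dot\otimes\ldots\dot\otimes\tau_{r_m}$ to $\tau_{r_1}\star\ldots\star\tau_{r_m}$. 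This gives exactly the claimed representation of $S(X)_{s,t}$, and uniqueness is inherited from the uniqueness of the above expansion in $T((B_k))$.

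For the summability, I would choose the norm on $B_k$ so that each $\tau_j$ ($j=1,\ldots,k$) has unit norm, which is possible since each $B^j$ is one-dimensional. The projective norm on $B^{\otimes R}$ then satisfies $|\pi_R S(\bar X)_{s,t}| = |\lambda_R^{s,t}|$ exactly, because $B^{\otimes R}$ is itself one-dimensional with generator $\tau_{r_1}\dot\otimes\ldots\dot\otimes\tau_{r_m}$ of unit norm. Applying~\cite[Thm.~2.6]{Gyurko16}, there is a control $\omega$ and a constant $\beta>0$ such that
\[
|\lambda_R^{s,t}| \leq \frac{\omega(s,t)^{\deg_\Pi(R)}}{\beta(\deg_\Pi(R))!}\;.
\]
Using $\deg_\Pi(R) \geq m/p_1$ for every length-$m$ multi-index $R$ (since $p_1 \geq p_j$ for all $j$), and that there are $k^m$ such multi-indexes, for any $K > 0$
\[
\sum_{m=0}^\infty K^m \sum_{r_1,\ldots,r_m=1}^k |\lambda_{r_1,\ldots,r_m}^{s,t}|
\leq \sum_{m=0}^\infty \frac{(Kk)^m\,\omega(s,t)^{m/p_1}}{\beta(m/p_1)!} < \infty\;,
\]
since the super-exponential factorial in the denominator dominates the geometric numerator.

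There is no substantial obstacle: the only point of care is the identification between the scalar coefficient $\lambda_R^{s,t}$ and the projective norm $|\pi_R S(\bar X)_{s,t}|$, which is trivial thanks to the one-dimensionality of each $B^j$. The content of the statement is thus precisely that $S(X)_{s,t}$ lies in the subgroup $\GG_k$ of Definition~\ref{def:GGk}, i.e., that $\exp(K\gamma_k)$ as in~\eqref{eq:exp K gamma def} is finite on $S(X)_{s,t}$ for every $K>0$.
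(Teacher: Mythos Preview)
Your proof is correct and follows exactly the approach the paper indicates: the corollary is stated as an immediate consequence of Theorem~\ref{thm:branched are geometric} part~\ref{point: main thm 4} together with the factorial decay of $\Pi$-rough path signatures from~\cite[Thm.~2.6]{Gyurko16}, and you have simply made this explicit.

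One minor point of care in your final display: the replacement $\omega(s,t)^{\deg_\Pi(R)} \to \omega(s,t)^{m/p_1}$ is only an upper bound when $\omega(s,t)\leq 1$. For general $\omega(s,t)$ you should instead use $\omega(s,t)^{\deg_\Pi(R)} \leq \max(1,\omega(s,t))^{m/p_k}$, which is still geometric in $m$ and is dominated by the factorial $(m/p_1)!$ in exactly the same way. This does not affect the argument.
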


\begin{remark}
We note that it is difficult to use Theorem~\ref{thm:branched are geometric} to obtain information about the individual quantities $\gen{S(X)_{s,t},\tau}$ for $\tau \in \FF$.
Nonetheless, using an independent method, it was shown in~\cite[Thm.~4]{Boedihardjo15} that $|\gen{S(X)_{0,1},\tau}| \lesssim c^{|\tau|}\tau!^{-1/p}$ uniformly over $\tau \in \FF$, where $\tau!$ denotes the tree factorial.
\end{remark}

\subsubsection{Non-commutative Fourier transform}\label{subsubsec:Fourier}
For the remainder of the section, we identify $\HH$ with $E_a(B)$ and $\hat\HH$ with $E(B)$ as topological algebras;
we also identify $\GG$ with $G$ as topological (Polish) groups.

Our main motivation for the construction of $\hat\HH$ as the complete universal locally $m$-convex algebra over $B$ is that one can readily characterise the continuous representations of $\hat\HH$.
Indeed, for any Banach algebra $A$, there is a natural bijection between continuous linear maps $M : B \to A$ and continuous algebra morphisms $M : \hat\HH \to A$.
Furthermore, for any normed space $V$, the set of continuous linear maps $M : B \to V$ has a straightforward characterisation: a linear map $M : B \to V$ is continuous if and only if there exists $N\geq 1$ such that $M(\tau_j) = 0$ for all $j \geq N$. 

For a complex finite-dimensional Hilbert space $H$, let $\LLL(H)$ denote the algebra of linear operators on $H$, and $\uu(H)$ \label{uu(H) page ref} the Lie subalgebra of $\LLL(H)$ consisting of anti-Hermitian operators on $H$.

\begin{definition}
Let $\A$ \label{mathcal{A} page ref} denote the family of all continuous finite-dimensional algebra representations $M : \hat\HH \to \LLL(H_M)$ which arise from extensions of continuous linear maps $M : B \to \uu(H_M)$, where $H_M$ ranges over all finite-dimensional complex Hilbert space.
We define the corresponding set of matrix coefficients by \label{mathcal{C} page ref}
\[
\CC = \{ \sigma \mapsto \gen{M(\sigma)u,v}_{H_M} : M \in \A, u,v \in H_M\}\;.
\]
\end{definition}

Note that $\CC$ consists of $\C$-valued continuous linear functionals on $\hat\HH$.
It readily follows that the restriction of any $M \in \A$ to $\GG$ is a continuous group morphism into the compact group of unitary operators on $H_M$ (see~\cite[Sec.~4]{ChevyrevLyons16}).
Moreover, by considering adjoint and tensor product representations, one can easily show that $\restr{\CC}{\GG}$ is closed under multiplication and conjugation, and is therefore a $*$-subalgebra of $C_b(\GG,\C)$.
To summarise, we have the following lemma.
\begin{lemma}
The set $\restr{\CC}{\GG}$ is a subspace of $C_b(\GG,\C)$ which is closed under multiplication and conjugation.
\end{lemma}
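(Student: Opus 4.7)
The plan is to verify each of the three properties separately by exhibiting, for the relevant combination of matrix coefficients, a new representation in $\A$ whose matrix coefficient realises that combination. The whole argument mirrors the standard representation-theoretic fact that the matrix coefficients of finite-dimensional unitary representations of a topological group form a $*$-algebra, except that here the algebraic operations on representations must be checked to stay within $\A$, i.e.\ to be induced by continuous linear maps $B\to\uu(\cdot)$.

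First I would check $\restr{\CC}{\GG}\subset C_b(\GG,\C)$: continuity of $g\mapsto\gen{M(g)u,v}_{H_M}$ is immediate from the continuity of $M$ and of the inner product, and boundedness by $\|u\|\|v\|$ is immediate from the fact, already recorded in the paper, that $M$ restricted to $\GG$ takes values in the unitary group of $H_M$. Closure under scalar multiplication is trivial ($c\gen{M(g)u,v}=\gen{M(g)u,\bar c\, v}$). For closure under addition I would pass to the direct sum: given $M_1,M_2\in\A$, define $M_1\oplus M_2:\hat\HH\to\LLL(H_{M_1}\oplus H_{M_2})$ in the obvious block-diagonal way, so that
\[
\gen{M_1(g)u_1,v_1}+\gen{M_2(g)u_2,v_2}=\gen{(M_1\oplus M_2)(g)(u_1\oplus u_2),v_1\oplus v_2}.
\]
The underlying map $B\to\uu(H_{M_1}\oplus H_{M_2})$, $\tau\mapsto M_1(\tau)\oplus M_2(\tau)$, is continuous and lands in anti-Hermitian operators, so $M_1\oplus M_2\in\A$.

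For closure under multiplication I would use tensor product representations built via the coproduct. Given $M_1,M_2\in\A$, set $M_1\otimes M_2:=(M_1\otimes M_2)\circ\Delta:\hat\HH\to\LLL(H_{M_1}\otimes H_{M_2})$, which is a continuous algebra morphism because $\Delta:\hat\HH\to\hat\HH\hat\otimes\hat\HH$ is. For $g\in\GG$, $\Delta g=g\otimes g$ gives $(M_1\otimes M_2)(g)=M_1(g)\otimes M_2(g)$, and hence the product of matrix coefficients is itself a matrix coefficient of $M_1\otimes M_2$ on $\GG$. To see $M_1\otimes M_2\in\A$, note that for $\tau\in B$ the coproduct $\Delta\tau=\tau\otimes\1+\1\otimes\tau$ gives $(M_1\otimes M_2)(\tau)=M_1(\tau)\otimes\Id+\Id\otimes M_2(\tau)$, which lies in $\uu(H_{M_1}\otimes H_{M_2})$ since each summand does, and the assignment is continuous in $\tau$.

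For closure under conjugation I would use the conjugate representation $\bar M$ acting on the conjugate Hilbert space $\bar H_M$, defined by $\bar M(\sigma)=\overline{M(\sigma)}$; the key point is that the complex conjugate of an anti-Hermitian operator is anti-Hermitian, so $\bar M\in\A$, and then $\overline{\gen{M(g)u,v}_{H_M}}=\gen{\bar M(g)\bar u,\bar v}_{\bar H_M}$. The only real thing to watch is the verification that direct sum, tensor-via-coproduct, and complex conjugation preserve the defining property of $\A$ (continuity into $\uu$); I expect this bookkeeping to be the main, though entirely routine, obstacle, and I would emphasise that the Hopf-algebra structure of $\hat\HH$ is exactly what makes the tensor product step work.
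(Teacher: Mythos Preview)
Your proposal is correct and follows essentially the same approach as the paper: the paper simply remarks that closure under multiplication and conjugation follows ``by considering adjoint and tensor product representations'' (with a reference to \cite[Sec.~4]{ChevyrevLyons16}), and you have spelled out exactly these constructions together with the direct-sum argument for closure under addition. The one point you rightly emphasise, and which the paper leaves implicit, is that the Hopf-algebra structure (specifically $\Delta\tau=\tau\otimes\1+\1\otimes\tau$ for $\tau\in B$) is what ensures the tensor representation restricts to an anti-Hermitian map on $B$ and hence lies in $\A$.
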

In order to apply a Stone--Weierstrass argument, the final and deeper point left to observe is that $\CC$ separates the points of $\hat\HH$.
Remark that if $B$ were finite-dimensional, this would follow directly from~\cite[Thm.~4.8]{ChevyrevLyons16}.
Although $B$ is infinite dimensional,
it holds that every element in $B^{\hat\otimes m}$ admits the form~\eqref{eq:element of B otimes k}.
We can thus apply the result of Giambruno--Valenti~\cite[Thm.~6]{Giambruno95} on polynomial identities of symplectic Lie algebras to conclude that, for every non-zero $\tau \in B^{\hat\otimes m}$, there exists $M \in \A$ such that $M(\tau) \neq 0$.
It follows from the identification of $\hat\HH$ with a space of formal series from Lemma~\ref{lem:completion}
that for every $\tau \in \hat \HH$,
\[
M(\tau) = 0\;,\; \; \forall M \in \A \;\;\; \Leftrightarrow \;\;\; \tau = 0
\]
(cf. proof of~\cite[Thm.~4.8]{ChevyrevLyons16}).
Using a Stone--Weierstrass argument, we obtain the following generalisation of~\cite[Cor.~4.12]{ChevyrevLyons16}.

\begin{theorem}\label{thm:charFunc}
Let $\mu, \nu$ be two Borel probability measures on $\GG$.
It holds that $\mu = \nu$ if and only if $\mu(f) = \nu(f)$ for all $f \in \CC$, or equivalently, $\mu(M) = \nu(M)$ for all $M \in \A$.
\end{theorem}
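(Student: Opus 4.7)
The plan is to combine the structural facts about $\CC$ already assembled in this subsection with a Stone--Weierstrass argument adapted to the Polish space $\GG$, closely following the strategy of~\cite[Cor.~4.12]{ChevyrevLyons16}. The ``only if'' direction is trivial, so I focus on the converse.

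First, I would verify that $\restr{\CC}{\GG}$ is a unital, point-separating $*$-subalgebra of $C_b(\GG,\C)$. Unitality comes from the trivial one-dimensional representation sending every $\tau \in B$ to $0 \in \uu(\C)$, which supplies the constant function $1$. Boundedness on $\GG$ follows because $\restr{M}{\GG}$ takes values in the unitary operators on $H_M$, so $|\gen{M(g)u,v}_{H_M}| \leq |u|\,|v|$ for every $g \in \GG$. Closure under multiplication and conjugation is already recorded, and the point-separation property on $\GG \subset \hat\HH$ is inherited from that on $\hat\HH$, which was extracted from the Giambruno--Valenti identity above.

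The second step is to pass from point separation to measure determination via tightness. Since $\GG$ is Polish, the measures $\mu$ and $\nu$ are Radon, so for every $\epsilon > 0$ there is a compact set $K \subset \GG$ with $\mu(K^c), \nu(K^c) < \epsilon$. Given $\phi \in C_b(\GG,\C)$ with $|\phi|_\infty \leq 1$, the classical Stone--Weierstrass theorem applied to $\restr{\CC}{K} \subset C(K,\C)$ yields $f \in \CC$ with $\sup_K |\phi - f| < \epsilon$. The main obstacle at this point is that $f$ is globally bounded on $\GG$ by unitarity but the bound may exceed $1$; this is handled by composing with a polynomial $p(z,\bar z)$ approximating the retraction of $\C$ onto the closed unit disk on $\{|z| \leq |f|_\infty\}$. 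Since $\CC$ is closed under products and conjugation, $p(f,\bar f) \in \CC$, is globally bounded by $1 + \epsilon$ on $\GG$, and still satisfies $\sup_K |p(f,\bar f) - \phi| \leq C \epsilon$. Splitting $|\mu(\phi) - \nu(\phi)|$ into contributions over $K$ and $K^c$, using the hypothesis $\mu(p(f,\bar f)) = \nu(p(f,\bar f))$, and sending $\epsilon \downarrow 0$ gives $\mu(\phi) = \nu(\phi)$ for all $\phi \in C_b(\GG,\C)$, hence $\mu = \nu$. The essential new input beyond the finite-dimensional setting of~\cite[Thm.~4.8]{ChevyrevLyons16} is the point-separation of $\CC$ on the infinite-dimensional algebra $\hat\HH$, supplied above by the Giambruno--Valenti identity.
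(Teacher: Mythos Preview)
Your proposal is correct and follows essentially the same approach as the paper, which itself only states ``Using a Stone--Weierstrass argument, we obtain the following generalisation of~\cite[Cor.~4.12]{ChevyrevLyons16}'' after assembling the same ingredients (unital $*$-subalgebra structure, boundedness via unitarity, point separation via Giambruno--Valenti). You have simply made explicit the tightness-plus-truncation argument that the paper defers to the cited reference; the minor imprecision about retracting onto the unit disk versus the disk of radius $1+\epsilon$ is inessential and easily fixed.
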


Theorem~\ref{thm:charFunc} naturally suggests the following definition. 

\begin{definition}[Fourier transform]
The abstract non-commutative Fourier transform of a probability measure $\mu$ on $\GG$ is the map $\hat\mu : \CC \to \C$, $\hat\mu : f \mapsto \mu(f)$, or equivalently, the map $\hat\mu : M \mapsto \mu(M)$, where $M \in \A$.
\end{definition}

\subsubsection{Moment problem}
With the Fourier transform in hand, we are ready to address the moment-problem.
Let $\X$ be a $\GG$-valued random variable for which $\gen{\X,\sigma}$ is integrable for every $\sigma \in \FF$ (e.g., $\X = S(X)_{0,1}$ for a random branched $p$-rough path with controlled moments).

\begin{definition}
We call the element of $\HH^*$
\begin{equation}\label{eq:expsig def}
\ExpSig(\X) = \sum_{\sigma \in \FF} \EEE{\gen{\X,\sigma}} \sigma 
\end{equation}
the expected signature of $\X$.
\end{definition}

Combining Theorem~\ref{thm:charFunc} with~\cite[Prop.~3.2]{ChevyrevLyons16}, we arrive at the following solution to the moment problem.

\begin{proposition}[Moment problem]\label{prop:moment prob}
Suppose $\X$ is a $\GG$-valued random variable such that $\ExpSig(\X)$ exists and lies in $\hat \HH$, i.e.,
\begin{equation}\label{eq:bound on ExpSig}
\exp(K\gamma_k)\big(\ExpSig(\X)\big) < \infty\;, \quad \forall K > 0\;,\;k \geq 1\;,
\end{equation}
where $\exp(K\gamma_k)$ is given by~\eqref{eq:exp K gamma def}.
If $\Y$ is another $\GG$-valued random variable such that $\ExpSig(\X) = \ExpSig(\Y)$, then $\X$ and $\Y$ and equal in law.
\end{proposition}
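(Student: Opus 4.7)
The plan is to reduce the statement to Theorem~\ref{thm:charFunc} by showing that $\ExpSig(\X) = \ExpSig(\Y)$ forces the non-commutative Fourier transforms of the laws $\mu_\X, \mu_\Y$ on $\GG$ to coincide. The central claim to establish is the identity
\[
\mathbb{E}[M(\X)] = M(\ExpSig(\X)) \qquad \text{for every } M \in \A,
\]
for any $\GG$-valued $\X$ satisfying~\eqref{eq:bound on ExpSig}. Once this is proved, the same identity holds with $\Y$ in place of $\X$; the equality of expected signatures then gives $\mathbb{E}[M(\X)] = \mathbb{E}[M(\Y)]$ for all $M \in \A$, and Theorem~\ref{thm:charFunc} delivers $\mu_\X = \mu_\Y$.

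To prove the core identity, I would first use the characterisation of continuous linear maps $B \to \uu(H_M)$ from Section~\ref{subsec:loc m convex alg}: such a map must satisfy $M(\tau_j) = 0$ for all $j$ sufficiently large. Fixing $N$ with $M(\tau_j) = 0$ for $j \geq N$ and writing $\X$ in its unique expansion
\[
\X = \sum_{m \geq 0} \sum_{r_1,\ldots,r_m \geq 1} \lambda_{r_1,\ldots,r_m}\, \tau_{r_1}\star\cdots\star\tau_{r_m},
\]
the continuity of the algebra morphism $M : \hat\HH \to \LLL(H_M)$ yields a convergent series
\[
M(\X) = \sum_{m \geq 0} \sum_{r_1,\ldots,r_m = 1}^{N-1} \lambda_{r_1,\ldots,r_m}\, M(\tau_{r_1})\cdots M(\tau_{r_m}).
\]
Formally exchanging sum and expectation gives precisely $M(\ExpSig(\X))$, so the task reduces to justifying this interchange.

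The main obstacle is exactly this Fubini-type step: the naive dominating function $K^m\sum_{r_1,\ldots,r_m = 1}^{N-1} \mathbb{E}|\lambda_{r_1,\ldots,r_m}|$ is \emph{not} controlled by the hypothesis~\eqref{eq:bound on ExpSig}, which provides only bounds on $|\mathbb{E}\lambda_{r_1,\ldots,r_m}|$. The resolution, following~\cite[Prop.~3.2]{ChevyrevLyons16}, is to leverage that $M$ restricted to $\GG$ takes values in the unitary group of $H_M$, so $\|M(\X)\|_{op} = 1$ almost surely, making $\mathbb{E}[M(\X)]$ a Bochner integral without any moment conditions on $\X$ itself. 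The identity $\mathbb{E}[M(\X)] = M(\ExpSig(\X))$ is then obtained by applying matrix-coefficient functionals: each coordinate $\gen{M(\cdot)u,v}$ is, by Lemma~\ref{lem:completion}, a continuous linear functional on $\hat\HH$ whose action on $\X$ is expressed by the convergent double series above, and the integrability hypothesis~\eqref{eq:bound on ExpSig} guarantees that this functional extends continuously to $\ExpSig(\X) \in \hat\HH$ and commutes with expectation.

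Finally, having established $\mathbb{E}[M(\X)] = M(\ExpSig(\X))$, the hypothesis $\ExpSig(\X) = \ExpSig(\Y)$ immediately yields $\hat\mu_\X = \hat\mu_\Y$ on $\A$, and Theorem~\ref{thm:charFunc} concludes. I expect that the cleanest presentation is to isolate the interchange as a separate lemma (quoting or adapting~\cite[Prop.~3.2]{ChevyrevLyons16} verbatim), so that the proof of Proposition~\ref{prop:moment prob} itself is a short chain: integrability $\Rightarrow$ $\mathbb{E}[M(\X)] = M(\ExpSig(\X))$ $\Rightarrow$ equality of Fourier transforms $\Rightarrow$ equality of laws via Theorem~\ref{thm:charFunc}.
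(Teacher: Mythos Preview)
Your proposal is correct and follows exactly the paper's approach: the paper's entire proof is the single sentence ``Combining Theorem~\ref{thm:charFunc} with~\cite[Prop.~3.2]{ChevyrevLyons16}, we arrive at the following solution to the moment problem,'' and you have correctly unpacked precisely this combination, identifying~\cite[Prop.~3.2]{ChevyrevLyons16} as the source of the interchange $\mathbb{E}[M(\X)] = M(\ExpSig(\X))$ and Theorem~\ref{thm:charFunc} as the conclusion.
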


\begin{remark}\label{rem:esig}
The reader may wonder the extent to which it is possible to control the quantities $\exp(K\gamma_k)\big(\ExpSig(\X)\big)$ given that it requires us to derive the form
\begin{equation}\label{eq:expsig alt form}
\ExpSig(\X) = \sum_{m=0}^\infty\sum_{r_1,\ldots,r_m}^\infty \lambda_{r_1,\ldots,r_m}\tau_{r_1}\star\ldots\star\tau_{r_m}\;,
\end{equation}
and there does not seem to be an easy way to determine the values $\lambda_{r_1,\ldots,r_m}$ from the expression~\eqref{eq:expsig def}.

In the case $\X=S(X)_{0,1}$ for a branched $p$-rough path $X$, however, we point out that the form~\eqref{eq:expsig alt form} can in fact arise more naturally than~\eqref{eq:expsig def} once we identify $X$ with a geometric rough path.
Furthermore, checking the bound~\eqref{eq:bound on ExpSig} can also become simpler due to the fact that
\begin{enumerate}[label=(\alph*)]
\item by Corollary~\ref{cor:sig in subalgebra}, the expression~\eqref{eq:expsig alt form} has no terms $\tau_r$ with $|\tau_r| > p$, and thus it suffices to check~\eqref{eq:bound on ExpSig} only for $k$ as in Definition~\ref{def:Bk def},
\item a number of methods are available to bound the expected signature of geometric rough paths, one of the most applicable being better-than-exponential tails on the local $p$-variation~\cite{CassLittererLyons13}, see~\cite[Cor.~6.5]{ChevyrevLyons16}.
\end{enumerate}
\end{remark}

\section{Example: It{\^o}-lift of a semi-martingale}
\label{subsec:example}
Suppose $p \in (2,3)$ and $(X^i)_{i=1}^d : [0,1] \to \R^d$ is a semi-martingale.
Let $X : \triangle \to \GG^2$ be the corresponding It{\^o} branched $p$-rough path defined by
\[
X_{s,t}
= \1 + \sum_{i=1}^d \bullet_i X^i_{s,t} + \sum_{1 \leq i \leq j \leq d} \bullet_i\bullet_j X^i_{s,t}X^j_{s,t}  + \sum_{i,j=1}^d [\bullet_i]_j \int_{s}^t X^i_{s,u} \mathrm{d}X^j_u \;,
\]
where $[\bullet_i]_j$ is defined in Remark~\ref{rem:[tree] notation} and the integral is defined in the sense of It{\^o}. Recall the subspace of trees $B_k$ from Definition~\ref{def:Bk def}.
Using the basic identity\footnote{In practice, one computes $\tau\star\sigma$ by the definition of $\star$ as the dual of the Connes-Kreimer coproduct which in turn is defined in terms of admissible cuts.}
\[
\bullet_i \star \bullet_j =
\begin{cases}
\bullet_i\bullet_j + [\bullet_i]_j &\textnormal{ if } i \neq j\;,
\\
2\bullet_i\bullet_i + [\bullet_i]_i &\textnormal{ if } i = j\;,
\end{cases}
\]
it is easy to check in this case that $k = \frac{d(d+3)}{2}$ and a suitable basis for $B_k$ is
\[
\{\bullet_i :  1\leq i \leq d\} \cup \{[\bullet_i]_j : 1 \leq i \leq j \leq d\}\;.
\]
Furthermore, using the identities
\begin{align*}
&\int_{s}^t X^i_{s,u}\mathrm{d}X^j_u = \int_{s}^t X^i_{s,u} \circ \mathrm{d} X^j_u - \frac{1}{2}[X^i,X^j]_{s,t}\;,
\\
&X^i_{s,t}X^j_{s,t} = \int_{s}^t X^i_{s,u} \circ \mathrm{d}X^j_u + \int_{s}^t X^j_{s,u} \circ \mathrm{d}X^i_u \;,
\end{align*}
where $\circ \mathrm{d}$ denotes the Stratonovich differential, we see that the unique way to write $X_{s,t}$ in the form~\eqref{eq:expression in HH^N} is
\begin{equation}\label{eq:Ito-Strat_conversion}
\begin{split}
X_{s,t}
&=
\1 + \sum_{i=1}^d X^i_{s,t}\bullet_i + \sum_{i,j=1}^d \int_s^t X^i_{s,u}\circ \mathrm{d}X^j_u \bullet_i \star \bullet_j
\\
&\quad +\frac{1}{2}\sum_{1 \leq i < j \leq d} [X^i,X^j]_{s,t}[\bullet_i,\bullet_j]
- \sum_{1 \leq i < j \leq d} [X^i,X^j]_{s,t}[\bullet_i]_j
\\
&\quad - \frac{1}{2}\sum_{i=1}^d [X^i,X^i]_{s,t}[\bullet_i]_i\;,
\end{split}
\end{equation}
where $[\bullet_i,\bullet_j] = \bullet_i\star\bullet_j - \bullet_j\star\bullet_i$ is the usual Lie bracket in $\HH^2$.

It follows that the corresponding geometric $\Pi$-rough path (or simply $p$-rough path if we ignore the refined $\Pi$-variation) over $B_k$ is given, in the first $d$ components, by the canonical (Stratonovich) geometric lift of $(X^i)_{i=1}^d$ with a bounded variation drift added to the L{\'e}vy area, and, in the remaining $\frac{d(d+1)}{2}$ components, by another bounded variation drift. 

Let us now specialise to the case that $(X^i)_{i=1}^d$ is a Brownian motion with zero mean and covariance $[X^i,X^j]_{s,t}=\Sigma^{i,j}(t-s)$.
We can treat $X$ as a $G^2(B_k)$-valued continuous L{\'e}vy process and apply~\cite[Thm.~53]{FrizShekhar17} to obtain the formula
\begin{multline*}
\ExpSig(S(X)_{0,t}) = \exp_{\star}\Big[ t \Big( 
\frac{1}{2}\sum_{i,j=1}^d \Sigma^{i,j} \bullet_i\star\bullet_i
+ \frac{1}{2}\sum_{1\leq i < j \leq d} \Sigma^{i,j}[\bullet_i,\bullet_j]
\\
- \sum_{1\leq i < j \leq d}\Sigma^{i,j}[\bullet_i]_j  
-\frac{1}{2}\sum_{i=1}^d \Sigma^{i,i}[\bullet_i]_i
\Big) \Big]\;,
\end{multline*}
from which it is manifest that $\ExpSig(S(X)_{0,t})$ satisfies~\eqref{eq:bound on ExpSig}.
We conclude by Proposition~\ref{prop:moment prob} that $S(X)_{0,t}$ is the unique $\GG$-valued random variable with the above expected signature.

\begin{remark}
As pointed out in Remark~\ref{rem:esig}, an alternative method to check the bound~\eqref{eq:bound on ExpSig} is to note that we have sufficient bounds on the local $p$-variation of $X$ (as a random geometric rough path).
The advantage of this method is that it readily generalises to stochastic processes for which an explicit form of the expected signature is not readily available, including a wide class of Gaussian and Markovian rough paths~\cite{CassLittererLyons13, CassOgrodnik17, FrizGess16}.
\end{remark}
  
\begin{remark} 
A series of papers~\cite{BCEF18, CEFMW14, EFMPW15} have studied the relation between It{\^o} and Stratonovich iterated integrals, particularity in relation with the quasi-shuffle algebra and Hoffman's exponential.
One of the main results of~\cite{BCEF18} is the existence of a unique a Hopf algebra morphism $\Psi^* : \HH^* \to \HH^*$ whose adjoint is the arborification of the Hoffman exponential, see~\cite[Thm.~2]{BCEF18}.
In particular, a level-$N$ branched rough path satisfying the shuffle identity can be mapped through $\Psi^*$ to a level-$N$ branched rough path satisfying the quasi-shuffle identity (e.g., Stratonovich- resp. It{\^o}-lift of Brownian motion), which provides in this case a higher order analogue of~\eqref{eq:Ito-Strat_conversion}.
Note that the results of this article are somewhat distinct from those of~\cite{BCEF18} since we are not directly interested in mapping one branched rough path to another, but rather in reinterpreting every branched rough path as a geometric rough path (over a different space).
\end{remark}

\appendix

\section{Symbolic index}

In this appendix, we collect the most used symbols of the article, together
with their meaning and the page where they were first introduced.

 \begin{center}

 \renewcommand{\arraystretch}{1.1}
 \begin{tabular}{lll}\toprule
 Symbol & Meaning & Page\\
 \midrule
 $|\sigma|$ & Number of nodes in a forest $\sigma \in \FF$ & \pageref{|tau| page ref}\\
 $\mathcal{A}$ & Algebra morphisms $M:\hat{\HH}\rightarrow \mathbf{L}(H_M)$ with $M(B) \subset \uu(H_M)$ & \pageref{mathcal{A} page ref}\\
$\mathcal{A}_k$ & $\{ (r_1,\ldots, r_m):r_i \in \{1,\ldots, k\}, m\geq 0 \}$ & \pageref{mathcal{A}_k page ref}\\ 
$\mathcal{A}^{\Pi}_{s}$ & $\{R\in \mathcal{A}_{k} : \deg_{\Pi}(R)\leq s\}$ & \pageref{mathcal{A}^{Pi}_s page ref}\\
 $B$ & Vector subspace of $\BB$ such that $\HH \simeq T(B)$ as Hopf algebras & \pageref{B page ref}\\ 
$\BB$ & Span of $\TT$ & \pageref{BB page ref}\\
$B_k$ & Subspace of $B$ spanned by its first $k$ basis elements & \pageref{B_k page ref}\\
$\mathcal{C}$ & The set of matrix coefficients of elements in $\mathcal{A}$ & \pageref{mathcal{C} page ref}\\ 
$\deg_{\Pi}(R)$ & $\sum_{j=1}^k\frac{n_j(R)}{p_j}$ if $\Pi=(p_1,\ldots ,p_k)$ & \pageref{deg_{Pi}(R) page ref}\\
$d_{p\var}$ & Homogeneous $p$-variation metric & \pageref{d_{pvar} page ref}\\
$\delta$ & Coproduct on the Grossman--Larson algebra & \pageref{delta page ref}\\
$\Delta$ & Coproduct on the Hopf algebra $T((V))$ & \pageref{Delta page ref}\\
$E_a(B)$ & $T(B)$ equipped with a universal locally $m$-convex topology & \pageref{E_a(B) page ref}\\
$E(B)$ & Completion of $E_a(B)$ & \pageref{E(B) page ref}\\
$G$ & Group-like elements in $E(B)$ & \pageref{G page ref}\\
$\GG^*$ & Group-like elements in $\HH^*$ & \pageref{GG^* page ref}\\
$\GG^N$ & Image of $\GG^*$ under $\rho^N$ & \pageref{GG^N page ref}\\
$\GG$ & $\GG^* \cap \hat \HH$ & \pageref{GG page ref}\\
$\GG_k$ & $\GG\cap P(B_k)$ & \pageref{GG_k page ref}\\
$(\gamma_k)_{k\geq 0}$ & Sequence of semi-norms on $B$ & \pageref{gamma_k page ref} \\
$G^{(\Pi,1)}(V)$ & Exponential of Lie algebra in $T^{(\Pi,1)}(V)$ generated by $V$ & \pageref{G^{(Pi,1)}(V) page ref}\\
 \bottomrule
\end{tabular}\newpage
\begin{tabular}{lll}\toprule
Symbol & Meaning & Page\\
\midrule
$\FF$ & Set of all forests & \pageref{FF page ref}\\
 $\HH$ & Span of $\FF$ equipped with Grossman--Larson Hopf algebra & \pageref{HH page ref}\\
$\HH^*$ & Space of formal series in forests & \pageref{HH^* page ref}\\
$\HH^N$ & Subspace of $\HH^*$ spanned by forests with at most $N$ &\pageref{HH^N page ref}\\
$\HH^{(N)}$ & Subspace of $\HH^*$ spanned by forests with exactly $N$ nodes & \pageref{HH^(N) page ref}\\
$ \hat \HH$ & Completion of $\HH$ under topology induced by $\HH \simeq E_a(B)$ & \pageref{hat HH page ref}\\
$N_{\tau}$ & Node set of a rooted tree $\tau$ & \pageref{N_{tau} page ref}\\
$n_j(R)$ & $|\{i:r_i=j\}|$ if $R=(r_1,\ldots, r_m) \in \A_k$ & \pageref{n_j(R) page ref}\\
$\Vert \cdot \Vert_{p\var}$ & The $p$-variation norm & \pageref{Vert cdot Vert_{pvar} page ref}\\
$\omega$ & A control & \pageref{omega page ref}\\
$\pi^N$ & Projection of $\HH^*$ onto $\HH^{(N)}$ & \pageref{pi^N page ref}\\
$\pi_R$ & Projection of $T((V))$ onto $V^{\otimes R}$ & \pageref{pi_R page ref}\\
$P(V)$ & $\prod_{m=0}^\infty V^{\hat{\otimes} m}$ & \pageref{P(V) page ref}\\
$\Psi$ & Algebra isomorphism from $\HH^{\lfloor p \rfloor}$ to $T^{(\Pi,1)}(B_k)$ & \pageref{Psi page ref}\\
$\rho^N$ & Projection of $\HH^*$ onto $\HH^N$ & \pageref{rho page ref}\\ 
$\rho_{\Pi-var}$ & The $\Pi$-variation metric & \pageref{rho_{Pi-var} page ref}\\
$\curvearrowright$ & A pre-Lie product $\BB \times \BB \rightarrow \BB$ & \pageref{curvearrowright page ref}\\
$\star$ & Product in the Grossman--Larson algebra & \pageref{star page ref}\\
$\triangle$ & $\{(s,t): 0\leq s\leq t\leq 1\}$ & \pageref{triangle page ref}\\ 
$S(X)$ & Signature of a rough path $X$ & \pageref{S(X) page ref}\\ 
$\TT$ & Set of all rooted trees with node label set $\{1,\ldots, d\}$ & \pageref{TT page ref}\\
$T((V))$ & $\prod_{m=0}^\infty V^{{\otimes} m}$, i.e., formal series of tensors of $V$ & \pageref{T((V)) page ref}\\
$T(V)$ & Finite series in $T((V))$ & \pageref{T(V) page ref}\\
$T^{(\Pi,s)}(V)$ & Inhomogeneous tensors with $\Pi$-degree at most $s$ & \pageref{T^{(Pi,s)}(V) page ref}\\
$\dot \otimes $ & Tensor product on $T((V))$ & \pageref{dot otimes page ref}\\
$ \otimes$ & Algebraic tensor product & \pageref{otimes page ref}\\
$\hat \otimes$ & Completion of the algebraic tensor product & \pageref{hat otimes page ref}\\
$\uu(H)$ & Anti-Hermitian operators on $H$ &\pageref{uu(H) page ref}\\
$V^{\otimes R}$ & $V^{r_1}\otimes \ldots \otimes V^{r_m}$ if $R=(r_1,\ldots ,r_m)\in \A_k$ & \pageref{V^{otimes R} page ref}\\
$\bar{X}$ & Image under $\Psi$ of a branched rough path $X$ & \pageref{bar{X} page ref}\\

 \bottomrule
 \end{tabular}
 \end{center}

\bibliographystyle{./Martin}
\bibliography{./AllRefs}

\end{document}